\def\u{\mathfrak{u}}
\def\g{\mathfrak{g}}
\def\h{\mathfrak{h}}
\def\a{\mathfrak{a}}
\def\q{\mathfrak{q}}
\def\s{\mathfrak s}
\def\hcx{\{J_{\alpha}\}}
\def\C{\mathbb{C}}
\def\R{\mathbb{R}}
\def\Q{\mathbb{Q}}
\def\Z{\mathbb{Z}}
\def\N{\mathbb{N}}
\def\H{\mathbb H}
\def\al{\alpha}
\def\be{\beta}
\def\ga{\gamma}
\def\ad{\operatorname{ad}}
\def\tr{\operatorname{tr}}
\def\alt{\raise1pt\hbox{$\bigwedge$}}
\def\pint{\langle \cdotp,\cdotp \rangle }
\def\la{\langle}
\def\ra{\rangle}
\theoremstyle{plain}
\newtheorem{theorem}{\bf Theorem}[section]
\newtheorem{corollary}[theorem]{\bf Corollary}
\newtheorem{proposition}[theorem]{\bf Proposition}
\newtheorem{lemma}[theorem]{\bf Lemma}
\theoremstyle{definition}
\newtheorem{example}[theorem]{\bf Example}
\theoremstyle{remark}
\newtheorem{remark}[theorem]{\bf Remark}
\newcommand{\ri}{{\rm (i)}}
\newcommand{\rii}{{\rm (ii)}}
\newcommand{\riii}{{\rm (iii)}}
\title{Hypercomplex almost abelian solvmanifolds}
\author[A. Andrada]{Adri\'an Andrada}
\email{adrian.andrada@unc.edu.ar}
\author[M. L. Barberis]{Mar\'{\i}a Laura Barberis}
\email{mlbarberis@unc.edu.ar}
\date{}
\address{FAMAF, Universidad Nacional de C\'ordoba and CIEM-CONICET, Av. Medina Allende s/n, Ciudad Universitaria, X5000HUA C\'ordoba, Argentina}
\thanks{This work was partially supported by CONICET, SECyT-UNC and ANPCyT (Argentina) and the MATHAMSUD Regional Program 21-MATH-06}
\subjclass[2010]{53C26, 22E25, 22E40, 53C55}
\keywords{Hypercomplex structure, almost abelian Lie group, lattice, solvmanifold}
\begin{document}

\begin{abstract} We give a characterization of almost abelian Lie groups carrying left invariant hypercomplex structures and we show that the corresponding Obata connection is always flat.   We determine when such Lie groups admit HKT metrics and study the corresponding Bismut connection. We obtain the classification of hypercomplex almost abelian Lie  groups in dimension~$8$ and determine which ones admit lattices. We show that the corresponding 8-dimensional solvmanifolds are nilmanifolds or admit a flat hyper-K\"ahler metric.  Furthermore, we prove that any 8-dimensional compact flat hyper-K\"ahler manifold is a solvmanifold equipped with an invariant hyper-K\"ahler structure. 
We also construct almost abelian hypercomplex nilmanifolds and solvmanifolds in higher  dimensions.
\end{abstract}

\maketitle

\section{Introduction}
Hypercomplex manifolds are close quaternionic analogues of complex manifolds. Namely, a hypercomplex manifold is a smooth manifold $M$ equipped with a triple $\{J_1,J_2,J_3\}$ of complex structures satisfying the laws of the quaternions (see \eqref{quat} below). As a consequence, each tangent space of $M$ admits an $\H$-module structure, where $\H$ denotes the quaternions. Hence, $\dim_\R M =4n$,  $n\in \N$. Such a manifold admits a unique torsion-free connection which preserves each complex structure $J_\al$. This is known as the Obata connection \cite{Ob}, and its holonomy group is contained in the quaternionic general linear group $\operatorname{GL}(n, \H)$ (see \eqref{GL_kH}).

The classification of $4$-dimensional compact hypercomplex manifolds was given in \cite{Boy}: they are either tori,  $K3$ surfaces, or quaternionic Hopf surfaces. However, such a classification in dimension $8$ is not yet available.

Many examples of hypercomplex manifolds arise from Lie groups. Indeed, Joyce showed in \cite{joy} that any compact Lie group of dimension $4n$ admits a left invariant hypercomplex structure. Also, there are many examples of nilmanifolds (i.e., compact quotients of a simply connected nilpotent Lie group by a discrete subgroup) admitting invariant hypercomplex structures (see for instance \cite{DF0,DF1,DF,BDV}). These hypercomplex nilmanifolds have proved to be very useful when studying \textit{hyper-K\"ahler with torsion} (or HKT) geometry (see \S \ref{c&hc}). Indeed, it was shown in \cite{DF} that on a nilmanifold endowed with an invariant hypercomplex structure $\hcx$ satisfying $[J_\al x,J_\al y]=[x,y]$ for all $x,y$ and all $\al$ (i.e, $\hcx$ is \textit{abelian}), any hyperhermitian metric is automatically HKT. Later it was proved in \cite{BDV} that the converse holds: if a hypercomplex nilmanifold admits an invariant HKT metric then the hypercomplex structure is abelian. Moreover, a nilmanifold was the first example of a compact hypercomplex manifold not admitting HKT metrics (see \cite{FG}). Concerning the holonomy of the Obata connection of a $4n$-dimensional hypercomplex nilmanifold, it was shown in \cite{BDV} that it is contained in $\operatorname{SL}(n,\H)$, the commutator subgroup of $\operatorname{GL}(n, \H)$. Four-dimensional Lie groups equipped with left invariant hypercomplex structures were classified in \cite{Bar} and it was shown in \cite{Bar1} that any left invariant hyperhermitian metric on such a group is conformally hyper-K\"ahler. In the eight-dimensional case, the classification of hypercomplex nilpotent Lie groups was given in  \cite{DF1} and 
it was shown  that such a Lie group is either abelian or $2$-step nilpotent. 

On the other hand, there are not many general results about hypercomplex structures on solvmanifolds (i.e., compact quotients of a simply connected solvable Lie group by a discrete subgroup). Some of these results can be found, for instance, in \cite{BF,GT}. Also, in \cite{Pa}, hypercomplex structures admitting hyper-K\"ahler or locally conformal hyper-K\"ahler metrics are studied on almost abelian Lie groups and the  classification of such Lie groups  is provided in dimensions $4n\leq 12$. We recall that a Lie group is called almost abelian if its  Lie algebra has a codimension one abelian ideal. Almost abelian Lie groups  often arise as concrete examples with interesting geometric  properties and  have been considered by several authors  \cite{AO,BaF,CM,FT,FP1,FP,Fr,FrS}. 

Our goal is to study left invariant hypercomplex structures on almost abelian Lie groups.  The outline of the article is the following:  in \S \ref{sec2} we give the basic definitions and known results on hypercomplex structures and solvmanifolds. In Theorem \ref{characterization} of \S \ref{sec3} we characterize $4n$-dimensional almost abelian Lie algebras admitting hypercomplex structures in terms of a single matrix $A\in \mathfrak{gl}(4n-1,\R)$. Using this characterization we show that the Obata connection on any hypercomplex almost abelian Lie algebra is flat (Proposition \ref{Obata-flat}). We also determine, in \S \ref{section-HKT}, when a hypercomplex almost abelian Lie algebra admits either HKT or hyper-K\"ahler metrics (Proposition \ref{HKT}) and determine the corresponding Bismut connection (Proposition \ref{bismut}). In \S \ref{eight} we give the classification of $8$-dimensional almost abelian Lie algebras carrying hypercomplex structures (Theorem \ref{classification}) and then, using a characterization of the almost abelian Lie groups that admit lattices (\cite{Bo}), we show in \S \ref{section-lattices} that only two of the associated simply connected almost abelian Lie groups admit lattices. The corresponding solvmanifolds are nilmanifolds or  admit a flat hyper-K\"ahler metric. Moreover, we show that all 8-dimensional compact flat hyper-K\"ahler manifolds, which have been classified in \cite{Whitt}, are hypercomplex almost abelian solvmanifolds (Theorem \ref{8dim-HK}). In \S \ref{higher}, we provide several examples of higher dimensional hypercomplex almost abelian nilmanifolds
and solvmanifolds  arising from Theorem \ref{characterization}. Furthermore, using the flat Obata connection, we exhibit examples of compact solvmanifolds equipped with Clifford structures of arbitrary order $k\geq 2$. Finally,  we study  the tangent  bundle of any hypercomplex almost abelian Lie group.

\smallskip

\textit{Acknowledgements.} The authors are grateful to Leandro Cagliero, Graziano Gentili, Giulia Sarfatti and Alejandro Tolcachier for useful comments.

\medskip

\section{Preliminaries}\label{sec2}

\subsection{Complex and hypercomplex structures}\label{c&hc}

A complex structure on a differentiable manifold $M$ is an automorphism
$J$ of the tangent bundle satisfying $J^2=-I $, where $I$ is the identity endomorphism, and the 
integrability condition $N_{J}(X,Y) =0$ for all  vector fields $X,Y$ on $M,$ where $N_J$ is the Nijenhuis tensor:
\begin{equation}  N_{J}(X,Y) =
[X,Y]+J([JX,Y]+[X,JY])-[JX,JY].  \label{nijen} \end{equation}
A  Hermitian structure on $M$ is a pair $(J,g )$ of a complex structure $J$ and a Riemannian metric $g$ compatible with $J$, that is, $g (JX, JY) =g( X, Y) $ for all vector fields $X, Y\in \mathfrak{X}(M)$, or equivalently, $g (JX, Y) =-g( X, JY) $. Given a Hermitian structure $(J,g )$ on $M$, there exists a unique connection $\nabla^b$ satisfying:
\[
\nabla^bg=\nabla^bJ=0,  \;\;  \quad \qquad c(X,Y,Z):=g\left(X,T^b(Y,Z)\right) \;\text{ is a $3$-form, } 
\]
for $X,Y,Z \in  \mathfrak{X}(M)$, where $T^b$ is the torsion of $\nabla^b$ (see \cite{Bis}). This connection is called the Bismut (or Strominger) connection, which has its origin in physics \cite{St}. It is given by:
\begin{equation}\label{Bismut}
g\left(\nabla^b_XY,Z\right)=g\left(\nabla^g_XY,Z\right)+\frac12\, c(X,Y,Z), \quad X,Y,Z \in  \mathfrak{X}(M),
\end{equation}
where $\nabla^g$ is the Levi-Civita connection, and the $3$-form $c$ can be computed by $c(X,Y,Z)=d\omega(J X,J Y,J Z)$. Here, $\omega$ is the K\"ahler form associated to $(J,g )$, that is, $\omega(\cdot , \cdot )=g(J \cdot , \cdot )$. The  Hermitian structure is called strong K\"ahler with torsion (or pluriclosed) when $c$ is closed. This condition is equivalent to $dJd\omega=0$. 
  
A hypercomplex structure on $M$ is a triple of complex structures $\hcx$, $\al=1,2,3$, on $M$ satisfying the following conditions: 
\begin{eqnarray}J_1J_2&=&-J_2J_1=J_3, \hspace{2cm}  
J_1^2=J_2^2=J_3^2=-I ,  \label{quat} \\
 N_{J_{\al}}(X,Y) &=& 0   \;\;\text{ for all  } X,Y \in \mathfrak{X}(M), \;\; \al = 1,2,3. \label{integ}
\end{eqnarray}
It then follows that $M$ has a family of complex structures 
$J_{y}=y_1J_1+y_2J_2+y_3J_3$ parameterized by points $y = (y_1,y_2,y_3)$ in the unit sphere $S^2 \subset \R^3$, since the relations \eqref{quat} give that $J_y^2=-I$ and conditions \eqref{integ} imply that $N_{J_y}(X, Y)=0$ for all $X, Y\in \mathfrak{X}(M)$. It follows from \eqref{quat} that $T_pM$, for each $p\in M$, has an $\H$-module structure, where $\H$ denotes the quaternions; in particular, $\dim M \equiv 0 \; \pmod{4}$.
 
The quaternionic space ${\mathbb H}^n$ provides the standard model of a hypercomplex structure. Consider the real coordinates $(x_1, y_1, v_1, w_1, \dots, x_n, y_n, v_n,w_n)$ corresponding
to a point $(q_1, \dots, q_n)$ in ${\mathbb H}^n$, where $q_l= x_l+iy_l+jv_l+kw_l, \;
1 \leq l\leq n$.  Then
\begin{align*}
J_1 \left( \partial / \partial x_l \right)&= \partial/ \partial y_l,
&  J_1 \left(\partial / \partial v_l \right)&= \partial/ \partial w_l,
\hspace{.75cm} 1\leq l \leq n,  & J_1^2&=-I,\\
 J_2 \left( \partial / \partial x_l\right) &= \partial/ \partial v_l,
 &   J_2 \left(\partial / \partial y_l \right)&= -\partial/ \partial w_l,
\hspace{.5cm} 1\leq l \leq n,
 &J_2^2&=-I
 \end{align*}
defines a hypercomplex structure on ${\mathbb H}^n$, by setting $J_3=J_1J_2$.

Assume that a $4n$-dimensional manifold $M$ admits an atlas of charts
$\{(U_a,\varphi _a)\}$\ such that the transition functions
$\varphi _a \circ \varphi _b^{-1}:
\varphi _b (U_a \cap U_b) \rightarrow \varphi _a (U_a\cap U_b)$\
are hyperholomorphic, that is, they are holomorphic with respect to $J_\al$ for any $\al$, where  $\hcx$ is the hypercomplex structure
on ${\mathbb R}^{4n}={\mathbb H}^n$ considered above. By transferring the standard hypercomplex structure from ${\mathbb R}^{4n}$ to $M$ by means of these charts, we obtain a globally defined hypercomplex structure on $M$. A hypercomplex structure $\hcx$ on $M$ is called \textit{locally flat} when it is obtained in this way.  Moreover, it was shown in \cite{Som} that a manifold admitting a locally flat hypercomplex structure is  necessarily affine, that is, the transition  functions are restrictions of quaternionic affine maps (see also \cite{GGS}).

Given a hypercomplex structure $\hcx$ on $M$, there is a unique torsion-free connection $\nabla$ on $M$ such that $\nabla J_\al=0, \; \al =1,2,3$. It is called the Obata connection (see \cite{Ob})  and it was proved in \cite{Sol}  that it  can be computed as follows:
\begin{equation}\label{Obata}
\nabla_XY = \frac12 ([X,Y]+J_1[J_1X,Y]-J_2[X,J_2Y]+J_3[J_1X,J_2Y]), \quad X,Y \in \mathfrak{X}(M).\end{equation}
More generally, it can be verified that another expression for $\nabla$ is given by:
\begin{equation}\label{obata-alpha}
\nabla_XY = \frac12 ([X,Y]+J_\al [J_\al X,Y]-J_\be [X,J_\be Y]+J_\ga [J_\al X,J_\be Y]),
\end{equation}  
for any cyclic permutation  $(\al , \be , \ga)$  of $(1, 2, 3)$. It is known that $\hcx$
is locally flat if and only if the Obata connection $\nabla$ is flat (cf.  \cite[Theorem  11.2]{Ob}, \cite{GGS}). Using this fact, it turns out that not every hypercomplex structure is locally flat, in contrast with Newlander-Nirenberg's result for the complex case (\cite{new}). For instance, the $K3$ surfaces and the homogeneous spaces  $\operatorname{SU}(3)$ and $\operatorname{SO}(6)/\operatorname{SU}(2)$ are examples of non-affine manifolds which admit hypercomplex structures (\cite{Boy,joy}).

The holonomy group of the Obata connection, $\operatorname{Hol}(\nabla)$, is contained in the quaternionic  general linear group $\operatorname{GL}(n, \H)$ (see \eqref{GL_kH} below) since each complex structure is $\nabla$-parallel. The holonomy group of the Obata
connection is an important invariant of  hypercomplex manifolds, but it is not often determined 
explicitly.  However, it was shown in \cite{Sol} that the Obata holonomy on $\operatorname{SU}(3)$, equipped with a left invariant hypercomplex structure constructed by Joyce in \cite{joy}, coincides
with $\operatorname{GL}(2,\H)$. An important subgroup of $\operatorname{GL}(n, \H)$ is its commutator subgroup $\operatorname{SL}(n, \H)$ which appears in the Merkulov-Schwachh\"ofer list  of possible holonomy groups of a torsion-free linear connection \cite{MeS}.  The study of hypercomplex manifolds with Obata holonomy contained in $\operatorname{SL}(n,\H)$ is very active; see for instance \cite{GLV,IP,LW,LW1}.

A Clifford structure of order $k\geq 1$ on a manifold $M$ is a family of $k$ pairwise anticommuting complex structures on $M$ generating a subalgebra of dimension $2^k$ of $\operatorname{End}\, (TM)$ (see \cite{BDM,joy1}). This contains complex and hypercomplex  structures as special cases, with $k=1$ and $2$, respectively. 
These structures are known as flat Clifford structures by Moroianu and Semmelmann  in \cite{MS}. It was proved in \cite{SSTVP} that  non-abelian compact Lie groups do not admit left invariant Clifford structures of order $k\geq 3$. On the other hand, examples of such structures on non-compact Lie groups were obtained in \cite{BDM,joy1} for any $k\geq 2$.

A hyperhermitian structure on $M$ is a pair $(\hcx ,g )$ where $\hcx$ is a hypercomplex structure and $(J_{\al},g )$ is Hermitian for $\al =1,2,3$. 
An interesting subclass of hyperhermitian structures is given by hyper-K\"ahler structures \cite{Cal}, which are hyperhermitian structures such that $(J_\al , g)$ is K\"ahler for $\al =1,2,3$, that is, the  K\"ahler forms $\omega_\al$ associated to $(J_\al,g)$ are closed, $\al =1,2,3$. In this case, the Levi-Civita connection coincides with the Obata connection and its holonomy group is contained in $\operatorname{Sp}\,(n)$, where $\dim M=4n$.  Since $\operatorname{Sp}\,(n)\subset \operatorname{SU}\,(2n)$,  hyper-K\"ahler metrics are Ricci-flat. 
A less restrictive class of hyperhermitian structures are 
the so-called hyper-K\"ahler with torsion (or HKT) structures \cite{HP}. These are hyperhermitian structures satisfying $\partial\Omega=0$, where $\Omega=\omega_2+i\omega_3$ and $\partial$ is the Dolbeault differential on $(M,J_1)$, with  $\omega_\al$ as above. It is known (see \cite{GP}) that a hyperhermitian structure  $(\hcx ,g )$ on $M$ is HKT if and only if $\nabla^b_1=\nabla^b_2=\nabla^b_3$, where $\nabla^b_\al $ is the Bismut connection associated to $(J_\al ,g), \; \al =1,2,3$. 
The HKT structure is called strong or weak depending on whether the torsion $3$-form $c$ of $\nabla^b_\al $ is closed or not. We note that the class of hyper-K\"ahler manifolds is strictly contained in the class of HKT manifolds (see \cite{DF}) and this, in turn, is a proper class of hypercomplex manifolds (see \cite{FG}). 

\vspace{.3cm}
  
Let $G$ be a connected real Lie group with Lie algebra $\g$. A complex structure $J$ on $G$ is said to be left invariant if left translations by elements of $G$ are holomorphic maps. In this case $J$ is determined by the value at the identity of $G$. Thus, a left invariant complex structure on $G$ amounts to a  complex structure on its Lie algebra $\g$, that is, a real linear transformation $J$ of $\g$ satisfying $J^2 = -I$ and $N_J(x, y)=0$ for all $x, y$ in $\g.$ A Riemannian metric $g$ on $G$ is called left invariant when  left translations are isometries. Such a metric $g$ is determined by its value $g_e=\pint$ at the identity $e$ of $G$, that is, $\pint$ is a positive definite inner product on $T_e G=\g$. 

A Hermitian structure $(J,g)$ on $G$ is called left invariant when  both, $J$ and $g$, are left invariant, with analogous definitions for left invariant hypercomplex, hyperhermitian, hyper-K\"ahler or HKT structures. Given a left invariant Hermitian structure $(J,g)$ on $G$, let $J$ and $\pint$ denote the corresponding complex structure and Hermitian inner product on $\g$. We say that $(J, \pint)$ is a Hermitian structure on $\g$. In the same way, a left invariant hypercomplex, hyperhermitian, hyper-K\"ahler or HKT structure on $G$ induces the corresponding structure on~$\g$.  

 Since Ricci-flat homogeneous metrics are flat (see \cite{AK}), it follows that left invariant hyper-K\"ahler metrics are flat. For the more general class of left invariant HKT metrics, 
it was proved in  \cite{DF}  that   a hyperhermitian structure $(\hcx ,\pint )$ on $\g$ is HKT if and only if the following condition is satisfied:
\begin{align}
    \la [J_1x,J_1y],z\ra +\la [J_1y,J_1z],x\ra+ & \la [J_1z,J_1x],y\ra = \nonumber \\ 
 & = \la [J_2x,J_2y],z\ra +\la [J_2y,J_2z],x\ra +
\la J_2z,J_2x],y\ra \label{inv_hkt}\\
&  =\la [J_3x,J_3y],z\ra+\la [J_3y,J_3z],x\ra +\la J_3z,J_3x],y\ra \nonumber
\end{align}
for all  $x,y,z \in {\g}$. 

\medskip

\subsection{Almost abelian Lie groups and associated solvmanifolds}

In this article we will focus on a family of solvable Lie groups, namely, the almost abelian ones, and also on their associated solvmanifolds. 
We begin by recalling some facts about general solvmanifolds, and then move on to establish some properties of almost abelian Lie groups.

\medskip

A \textit{solvmanifold} is a compact quotient $\Gamma\backslash G$, where $G$ is a simply connected solvable Lie group and $\Gamma$ is a discrete subgroup of $G$. Such a subgroup $\Gamma$ is called a \textit{lattice} of $G$. When $G$ is nilpotent and $\Gamma\subset G$ is a lattice, the compact quotient $\Gamma\backslash G$ is known as a nilmanifold.

It follows that $\pi_1(\Gamma\backslash G)\cong \Gamma$ and  $\pi_n(\Gamma\backslash G)=0$ for $n>1$. Furthermore, solvmanifolds are determined up to diffeomorphism by their fundamental groups. In fact:

\begin{theorem}\cite[Theorem 3.6]{Rag}\label{solv-isom}
If $\Gamma_1$ and $\Gamma_2$ are lattices in simply connected solvable Lie groups 
$G_1$ and $G_2$, respectively, and $\Gamma_1$ is isomorphic to $\Gamma_2$, then $\Gamma_1 \backslash G_1$ is diffeomorphic to $\Gamma_2 \backslash G_2$.
\end{theorem}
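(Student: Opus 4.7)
The plan is to construct a $\varphi$-equivariant diffeomorphism $\Phi\colon G_1 \to G_2$ from a given abstract isomorphism $\varphi\colon \Gamma_1 \to \Gamma_2$; any such $\Phi$ satisfies $\Phi(\gamma g)=\varphi(\gamma)\Phi(g)$ and hence descends to a diffeomorphism $\Gamma_1\backslash G_1 \to \Gamma_2\backslash G_2$. A preliminary observation that gives the weaker statement of homotopy equivalence for free is that each quotient is aspherical: a simply connected solvable Lie group is diffeomorphic to $\R^n$, so $\Gamma_i\backslash G_i$ is a $K(\Gamma_i,1)$. The content of the theorem is to upgrade this homotopy equivalence to a smooth one.

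I would first dispose of the nilpotent case via Mal'cev's rigidity theorem. The rational span $\log(\Gamma)\otimes \Q$ carries, through the Baker--Campbell--Hausdorff formula, the structure of a $\Q$-Lie algebra whose real completion is canonically $\g$, and hence the simply connected group $G$. Any isomorphism $\varphi\colon \Gamma_1 \to \Gamma_2$ therefore extends uniquely to a Lie group isomorphism $\widetilde\varphi\colon G_1 \to G_2$, and $\Phi:=\widetilde\varphi$ is automatically equivariant.

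For the general solvable case the ambient groups $G_1,G_2$ need not be isomorphic as Lie groups, so one cannot hope to extend $\varphi$ to a group homomorphism. The approach is instead to use Mostow's structure theory: attach to each $\Gamma_i$ its algebraic hull, which is an invariant of the abstract group $\Gamma_i$, and use the associated semisimple splitting to present each solvmanifold as a nilmanifold bundle over a torus. One then transfers the fibration data across $\varphi$ and builds $\Phi$ by combining an equivariant diffeomorphism between base tori with a family of equivariant diffeomorphisms of the nilpotent fibers supplied by the previous step.

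The principal obstacle lies precisely in this transfer: one must check that the Mostow fibration is canonically determined by the isomorphism class of $\Gamma$, so that $\varphi$ sends the data on one side to the data on the other. This demands careful control of the interplay of $\varphi$ with the Fitting subgroup, the commutator chain, and the maximal torus quotient, and it is here that the full machinery of algebraic hulls is needed. Once this compatibility is in place, the fiber-bundle construction of $\Phi$ is essentially routine, and passing to the quotient yields the desired diffeomorphism.
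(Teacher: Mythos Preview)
The paper does not prove this statement at all: Theorem~\ref{solv-isom} is simply quoted, with citation, from Raghunathan's book \cite{Rag}, and no argument is given in the paper. There is therefore no ``paper's own proof'' to compare your proposal against.

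That said, your outline is essentially the route taken in the cited reference: the nilpotent case is handled by Mal'cev rigidity (the isomorphism of lattices extends to a Lie group isomorphism), and the general solvable case is reduced to the nilpotent one via Mostow's structure theory, presenting each solvmanifold as a nilmanifold fibration over a torus and showing that this presentation is determined up to diffeomorphism by the abstract isomorphism type of the lattice. Your identification of the delicate point --- that the fibration data must be shown to depend only on $\Gamma$ --- is accurate; this is exactly where the algebraic hull and semisimple splitting enter in Raghunathan's treatment. As a high-level sketch your proposal is correct, though of course the actual verification of the compatibility you flag occupies several pages in \cite{Rag} and is not something one can wave through.
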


The conclusion of the previous theorem can be strengthened when both solvable Lie groups $G_1$ and $G_2$ are completely solvable\footnote{A solvable Lie group $G$ is completely solvable if the adjoint operators $\ad_x:\g\to\g$, with $x\in \g=\operatorname{Lie}(G)$, have only real eigenvalues. In particular, nilpotent Lie groups are completely solvable.}. Indeed, this is the content of Saito's rigidity theorem:

\begin{theorem}\cite{Sai}\label{Saito}
Let $G_1$ and $G_2$ be simply connected completely solvable Lie groups and $\Gamma_1 \subset G_1, \, \Gamma_2\subset G_2$ lattices. Then every isomorphism
$f: \Gamma_1 \to \Gamma_2$ 
extends uniquely to an isomorphism of Lie groups $F: G_1 \to G_2$.
\end{theorem}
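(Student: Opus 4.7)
The plan is to reduce to Mal'cev's rigidity theorem for nilpotent Lie groups and then exploit the extra structure supplied by complete solvability. The key structural fact I would use throughout is that when $G$ is simply connected and completely solvable, the exponential map $\exp\colon\g\to G$ is a diffeomorphism; hence producing a Lie group isomorphism $F\colon G_1\to G_2$ is equivalent to producing a Lie algebra isomorphism $\varphi\colon\g_1\to\g_2$.

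For the nilpotent case (Mal'cev's theorem), I would use that the Baker--Campbell--Hausdorff formula makes $\log\Gamma$ generate a $\Q$-form $\g_\Q\subset\g$ with $\g_\Q\otimes_\Q\R=\g$, and that any lattice isomorphism extends $\Q$-linearly to a Lie algebra isomorphism of these rational forms, then $\R$-linearly to $\g$, and finally via $\exp$ to $G$. For the general completely solvable case, I would invoke Mostow's structural result: if $N_i$ denotes the nilradical of $G_i$, then $\Gamma_i\cap N_i$ is a lattice in $N_i$ and the quotient $\Gamma_i/(\Gamma_i\cap N_i)$ is a lattice $\cong\Z^{k_i}$ in the abelian group $G_i/N_i\cong\R^{k_i}$. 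Under complete solvability, $N_i$ can be recovered intrinsically from $\Gamma_i$ (for instance as the Fitting subgroup), so $f$ restricts to an isomorphism $\Gamma_1\cap N_1\to\Gamma_2\cap N_2$ and descends to one $\Z^{k_1}\to\Z^{k_2}$. The nilpotent case yields an extension $F_N\colon N_1\to N_2$, and one is left to show that the two pieces fit together into a well-defined isomorphism of the semi-direct products $G_i=N_i\rtimes\R^{k_i}$.

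The main obstacle will be this last assembly step: one must show that the $\Z^{k_i}$-equivariance already known for $f$ determines the full $\R^{k_i}$-equivariance on the nilradical. Complete solvability is what makes this work, because the adjoint action of $\R^{k_i}$ on $\n_i$ is then a commuting family of real-diagonalizable operators, and such a family on a finite-dimensional real vector space is uniquely determined by its restriction to any full lattice of $\R^{k_i}$. Without the real-eigenvalue hypothesis, the action could contain hidden rotational components invisible to $\Gamma_i$ (up to finite ambiguity), which is precisely the phenomenon that in the general solvable case forces the weaker conclusion ``diffeomorphic'' of Theorem \ref{solv-isom} rather than ``isomorphic'' here.
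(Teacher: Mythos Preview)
The paper does not contain a proof of this statement: Theorem~\ref{Saito} is quoted verbatim from Saito \cite{Sai} and used as a black box (for instance in Example~\ref{hcx-non-HK}). There is therefore no in-paper argument to compare your proposal against.

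As for the sketch itself, the overall architecture---reduce to Mal'cev rigidity on the nilradical, then propagate across the abelian quotient using that complete solvability pins down the one-parameter groups---is the standard route and is sound. A few points would need tightening in a full write-up. First, the semidirect decomposition $G_i=N_i\rtimes\R^{k_i}$ is indeed available: any linear complement $\a_i$ to $\n_i$ in $\g_i$ satisfies $[\a_i,\a_i]\subset[\g_i,\g_i]\cap\a_i\subset\n_i\cap\a_i=0$, so it is automatically an abelian subalgebra. Second, the assertion that $\Gamma_i\cap N_i$ is the Fitting subgroup of $\Gamma_i$ is not automatic and is exactly where an extra hypothesis such as complete solvability must be invoked; you flag this but do not justify it, and it is the most delicate step. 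Third, in the assembly step the adjoint operators of $\R^{k_i}$ on $\n_i$ are real-\emph{triangularizable}, not in general diagonalizable; your uniqueness-of-extension argument survives (a matrix with positive real spectrum has a unique real logarithm with real spectrum), but the phrasing ``real-diagonalizable'' overstates what complete solvability gives you.
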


Solvmanifolds of completely solvable Lie groups have a very nice property concerning their de Rham cohomology. Indeed, Hattori \cite{Hat} proved that the natural inclusion 
\begin{equation}\label{inclusion}
   \alt^* \g^* \hookrightarrow \Omega^*(\Gamma\backslash G),
\end{equation} with $G$ completely solvable, induces an isomorphism 
\begin{equation}\label{deRham}
H^*(\g) \cong H^*_{dR}(\Gamma\backslash G).
\end{equation}
That is, the de Rham cohomology of the solvmanifold can be computed in terms of left invariant forms. In particular, $H^*_{dR}(\Gamma\backslash G)$ does not depend on the lattice $\Gamma$.
The isomorphism \eqref{deRham} was proved earlier for nilmanifolds by Nomizu \cite{Nom}. 

For general  solvmanifolds it is well known that the natural  inclusion \eqref{inclusion} induces  an injective homomorphism $H^*(\g)\hookrightarrow H^*_{dR}(\Gamma\backslash G)$; in particular, \begin{equation}\label{betti1}
    b_1(\Gamma\backslash G)\geq 1 ,
\end{equation} 
see for instance \cite[Corollary 3.11]{Bo}. 

In general, it is not easy to determine whether a given Lie group $G$ admits a lattice. A well known restriction is that if this is the case then $G$ must be unimodular (\cite{Mi}), i.e. the Haar measure on $G$ is left and right invariant, which is equivalent, when $G$ is connected, to  $\tr(\ad_x)=0$ for any $x$ in the Lie algebra $\g$ of $G$. In the nilpotent case there is a criterion to determine the existence of lattices, due to Malcev:

\begin{theorem}\cite{Mal}\label{Mal}
A simply connected nilpotent Lie group has a lattice if and only if its Lie algebra admits a basis with respect to which the structure constants are rational.
\end{theorem}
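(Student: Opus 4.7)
The plan is to exploit the fact that for a simply connected nilpotent Lie group $G$ the exponential map $\exp\colon\g\to G$ is a diffeomorphism and the group law, transported to $\g$ via $\exp$, is given by the Baker--Campbell--Hausdorff (BCH) polynomial, which for nilpotent $\g$ terminates after finitely many steps and has rational coefficients in the iterated brackets. Both directions of the equivalence rest on a careful bookkeeping of the denominators appearing in BCH.

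For the ``if'' direction, suppose $\{X_1,\dots,X_n\}$ is a basis of $\g$ with rational structure constants, and let $\g_{\Q}$ be its $\Q$-span. Then $\g_{\Q}$ is a rational Lie subalgebra, and since BCH is a polynomial in the bracket, it restricts to a map $\g_{\Q}\times\g_{\Q}\to\g_{\Q}$. Because only finitely many BCH terms appear, there is a positive integer $N$ such that the $\Z$-module $L=N\cdot\Z\text{-span}(X_1,\dots,X_n)$ is stable under BCH. Set $\Gamma=\exp(L)$; this is a subgroup of $G$ by construction. It is discrete because $L$ is discrete in $\g$ and $\exp$ is a diffeomorphism, and it is cocompact because $L$ is cocompact in $\g\cong\R^n$ and $\exp$ is a proper diffeomorphism. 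Hence $\Gamma$ is a lattice.

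For the ``only if'' direction, assume $\Gamma\subset G$ is a lattice. The key input is that intersecting $\Gamma$ with the terms $G^{(k)}$ of the descending central series yields lattices in each $G^{(k)}$, which one proves inductively using that $G^{(k)}/G^{(k+1)}$ is a simply connected abelian Lie group and that $\Gamma G^{(k+1)}/G^{(k+1)}$ is a lattice in the quotient. Building up a strong Malcev basis $\{X_1,\dots,X_n\}$ of $\g$ adapted to this filtration and chosen so that $\exp(X_i)$ are in $\Gamma$ (up to passing to a finite-index sublattice), one expresses the BCH product $\exp(X_i)\exp(X_j)$ again in the Malcev coordinates induced by the basis. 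Since this product lies in $\Gamma$, the resulting coordinates are integers, and comparing with the BCH polynomial forces the brackets $[X_i,X_j]$ to have rational coordinates in the $X_k$'s, i.e. the structure constants are rational.

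The main obstacle in both directions is controlling the denominators in BCH: in the ``if'' direction one must verify that a sublattice of the rational form can be made closed under the group operation, and in the ``only if'' direction one must go backwards and deduce rationality of brackets from the fact that BCH-products of lattice elements stay in the lattice. In both cases this is handled by the fact that, thanks to nilpotency, BCH is a \emph{finite} polynomial expression, so only finitely many denominators intervene and can be absorbed into a suitable rescaling of the basis.
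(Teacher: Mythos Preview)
The paper does not prove this statement; it is quoted as Malcev's classical theorem with a citation to \cite{Mal} and no argument is given. So there is no ``paper's own proof'' to compare against.

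Your sketch is the standard proof of Malcev's theorem and is essentially correct. One point that deserves slightly more care is the claim in the ``if'' direction that a single rescaling by $N$ makes $L=N\cdot\Z\text{-span}(X_1,\dots,X_n)$ closed under BCH: since BCH is not homogeneous, the degree-$k$ term applied to elements of $N\cdot L_0$ lands in $N^{k}\cdot D_k^{-1}\cdot L_0$ for a bounded denominator $D_k$ (coming from the BCH coefficients and finitely many products of structure constants), and one must note that $k\geq 1$ together with nilpotency (finitely many degrees) is what allows a sufficiently divisible $N$ to absorb all of these at once. In the ``only if'' direction, the genuinely nontrivial input is that $\Gamma\cap G^{(k)}$ is a lattice in each term of the descending central series; your inductive outline via the abelian quotients $G^{(k)}/G^{(k+1)}$ is the right mechanism, though in a full proof this step requires its own argument (this is where most of the work in Malcev's paper lies).
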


We point out that left invariant geometric structures defined on $G$ induce corresponding geometric structures on $\Gamma\backslash G$, which are called invariant. For instance, a left invariant complex structure (respectively, Riemannian metric) on $G$ induces a complex structure (respectively, Riemannnian metric) on $\Gamma\backslash G$ such that the canonical projection $G\to \Gamma\backslash G$ is a local biholomorphism (respectively, local isometry). 

\ 

We move on now to almost abelian Lie groups. A Lie group $G$ is called {almost abelian} if its Lie algebra $\g$ has a codimension one abelian ideal $\u$. Such a Lie algebra will also be called almost abelian, and it can be written as $\g= \R e_0 \ltimes  \u$ for some  $e_0\notin \u$. Since $\u$ is abelian, for some $d\in\N$ we may write $\g=\R e_0\ltimes_A \R^d$, where $A\in \mathfrak{gl}(d,\R)$ is given by $A=\ad_{e_0}|_\u$ where we identify $\u$ with $\R^d$ after some choice of basis. Accordingly, the Lie group $G$ is a semidirect product 
$G=\R\ltimes_\varphi \R^d$, where the action is given by $\varphi(t)=\exp(t A)$. Clearly an almost abelian Lie algebra is solvable, and it is completely solvable if and only if the matrix $A$ has only real eigenvalues. Moreover, it is nilpotent if and only if $A$ is nilpotent. 

Regarding the isomorphism classes of almost abelian Lie algebras, we have the following result, proved in \cite{Fr}.

\begin{lemma}\label{ad-conjugated}
Two almost abelian Lie algebras $\g_1=\R e_1 \ltimes_{A_1} \R^d$ and $\g_2=\R e_2 \ltimes_{A_2}\R^d$ are isomorphic if and only if there exists $c\neq 0$ such that $A_2$ and $cA_1$ are conjugate. 
\end{lemma}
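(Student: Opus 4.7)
The plan is to handle both directions via explicit linear-algebraic constructions adapted to the semidirect product structure $\g_i=\R e_i\ltimes_{A_i}\R^d$. For the ``if'' direction, suppose $A_2 = cPA_1P^{-1}$ with $c\neq 0$ and $P\in GL(d,\R)$. I would define $\phi:\g_1\to\g_2$ by
\[
\phi(e_1)=c^{-1}e_2, \qquad \phi|_{\R^d}=P.
\]
Since $\R^d$ is abelian in both algebras, the only nontrivial bracket condition is $\phi([e_1,v])=[\phi(e_1),\phi(v)]$ for $v\in\R^d$, which reduces to $PA_1v = c^{-1}A_2Pv$, precisely the hypothesis. Thus $\phi$ is a Lie algebra isomorphism.

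For the ``only if'' direction, let $\phi:\g_1\to\g_2$ be a Lie algebra isomorphism. The key preliminary step is to show that $\phi$ (possibly after adjustment) carries the abelian ideal $\R^d\subset\g_1$ onto $\R^d\subset\g_2$. If $A_1$ is not nilpotent then $\g_1$ is not nilpotent and $\R^d$ is its nilradical, a characteristic ideal, so $\phi(\R^d)=\R^d$ automatically. If $A_1$ is nilpotent I would analyze the possible codimension-one abelian ideals $\h\subset\g_1$ directly: any $\h\ne\R^d$ must have the form $\R(e_1+u)+(\h\cap\R^d)$, and abelianness forces $\h\cap\R^d\subseteq\ker A_1$, which in turn requires $\operatorname{rank}(A_1)\leq 1$. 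Since $\dim[\g_i,\g_i]=\operatorname{rank}(A_i)$ is an isomorphism invariant, the residual cases $\operatorname{rank}(A_i)=0$ and $\operatorname{rank}(A_i)=1$ correspond respectively to the abelian Lie algebra and to $\h\oplus\R^{d-2}$ with $\h$ the three-dimensional Heisenberg algebra; in both cases $A_1$ and $A_2$ are manifestly conjugate (take $c=1$). When $\operatorname{rank}(A_1)\geq 2$, the ideal $\R^d$ is the unique codimension-one abelian ideal of $\g_1$ (and similarly for $\g_2$), so $\phi(\R^d)=\R^d$.

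Once the reduction $\phi(\R^d)=\R^d$ is in place, write $P=\phi|_{\R^d}\in GL(d,\R)$ and $\phi(e_1)=\lambda e_2+u_0$ with $\lambda\neq 0$ (since $\phi$ is an isomorphism) and $u_0\in\R^d$. Using $[\R^d,\R^d]=0$ in $\g_2$, the identity $\phi([e_1,v])=[\phi(e_1),\phi(v)]$ for $v\in\R^d$ collapses to
\[
P A_1 v = \lambda\, A_2 P v \qquad \text{for all } v\in\R^d,
\]
i.e. $A_2=\lambda^{-1}PA_1P^{-1}$, and $c=\lambda^{-1}$ finishes the argument. The main technical obstacle is precisely the nilpotent case, where $\R^d$ need not be the unique codimension-one abelian ideal; this is handled by the rank dichotomy above, after which the remainder of the proof is a one-line matrix computation.
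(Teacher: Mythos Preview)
The paper does not supply its own proof of this lemma; it is quoted as a known result from \cite{Fr} (Freibert). So there is no in-paper argument to compare against, and your proposal stands on its own.

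Your proof is correct. The ``if'' direction is the obvious construction. For the ``only if'' direction, the reduction to $\phi(\R^d)=\R^d$ is the only nontrivial step, and your case split is sound: when $A_1$ is not nilpotent, $\R^d$ is indeed the nilradical (it is a nilpotent ideal of codimension one in a non-nilpotent Lie algebra, hence maximal), so it is characteristic; when $A_1$ is nilpotent, your analysis of a second codimension-one abelian ideal $\h\neq\R^d$ correctly yields $\h\cap\R^d\subseteq\ker A_1$ with $\dim(\h\cap\R^d)=d-1$, forcing $\operatorname{rank}A_1\leq 1$. One small point worth making explicit in the rank-$1$ residual case: you should note that $\g_1\cong\g_2$ forces $\g_2$ to be nilpotent as well, hence $A_2$ is also nilpotent of rank $1$, so $A_1$ and $A_2$ share the same Jordan form (a single $2\times 2$ nilpotent block plus zeros) and are conjugate with $c=1$. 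After the reduction, the matrix identity $PA_1=\lambda A_2P$ is immediate, as you say.
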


An important feature concerning almost abelian Lie groups is that there exists a criterion to determine when such a Lie group admits lattices. Indeed, there is the following result which will prove very useful in forthcoming sections:

\begin{proposition}\label{latt}\cite{Bo}
Let $G=\R\ltimes_\varphi\R^d$ be a unimodular almost abelian Lie group. Then $G$ admits a lattice if and only if there exists  $t_0\neq 0$ such that $\varphi(t_0)$ is conjugate to an invertible integer matrix.  In this situation, a lattice is given by $\Gamma=t_0 \Z\ltimes P\mathbb Z^{d}$, where $P\in \operatorname{GL}(d,\R)$ satisfies $P^{-1}\varphi(t_0)P\in \operatorname{SL}(d,\Z)$. 
\end{proposition}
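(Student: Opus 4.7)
The plan is to split into sufficiency and necessity, after first recording an easy consequence of unimodularity. Since $G = \R \ltimes_\varphi \R^d$ with $\varphi(t) = \exp(tA)$ is unimodular, one has $\tr A = 0$, so $\det \varphi(t) = e^{t \tr A} = 1$ for every $t \in \R$. In particular, any integer matrix conjugate to some $\varphi(t_0)$ automatically has determinant $1$, so the statement loses nothing in writing $P^{-1}\varphi(t_0)P \in \operatorname{SL}(d,\Z)$.

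For the sufficiency direction, I would set $M := P^{-1}\varphi(t_0)P \in \operatorname{SL}(d,\Z)$ and $\Gamma := \{(k t_0, P n) : k \in \Z,\, n \in \Z^d\}$. A direct computation in the semidirect product,
\[(kt_0, Pv)(lt_0, Pw) = \bigl((k+l)t_0,\, Pv + \varphi(kt_0)Pw\bigr) = \bigl((k+l)t_0,\, P(v + M^k w)\bigr),\]
shows that $\Gamma$ is closed under multiplication (and similarly under inverses) because $M^k \in \operatorname{SL}(d,\Z)$ preserves $\Z^d$. Discreteness is immediate from the coordinate description. For cocompactness, the canonical projection $G \to \R$ descends to a locally trivial fibration $\Gamma\backslash G \to (t_0\Z)\backslash \R$ whose base is a circle and whose typical fiber is the torus $(P\Z^d)\backslash \R^d$, so $\Gamma\backslash G$ is compact.

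For the necessity direction, given a lattice $\Gamma \subset G$, I would consider the projection $\pi : G \to G/\R^d \cong \R$ and set $\Lambda := \Gamma \cap \R^d$. The standard theory of lattices in simply connected solvable Lie groups (cf.\ \cite{Rag}) guarantees that $\Lambda$ is a lattice in $\R^d$ and that $\pi(\Gamma)$ is discrete in $\R$, so $\pi(\Gamma) = t_0 \Z$ for some $t_0 > 0$. Choosing $\gamma_0 = (t_0, v_0) \in \Gamma$, a short computation shows that conjugation by $\gamma_0$ preserves the normal subgroup $\R^d$ and acts there as $\varphi(t_0)$; since it must send $\Lambda$ to itself, picking a $\Z$-basis of $\Lambda$ as the columns of $P \in \operatorname{GL}(d,\R)$ yields $P^{-1}\varphi(t_0)P \in \operatorname{GL}(d,\Z)$, which lies in $\operatorname{SL}(d,\Z)$ by the opening remark.

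The main anticipated obstacle is the assertion in the necessity direction that $\Lambda = \Gamma \cap \R^d$ is actually a lattice in $\R^d$, equivalently that $\pi(\Gamma)$ is discrete rather than dense in $\R$. This is not automatic and is precisely where Mostow's structural results for lattices in simply connected solvable Lie groups come in; once that ingredient is granted, the remainder of the argument amounts to straightforward bookkeeping in the semidirect product.
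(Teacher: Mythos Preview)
The paper does not give its own proof of this proposition: it is quoted verbatim from \cite{Bo} and used as a black box throughout \S\ref{section-lattices} and \S\ref{higher}. There is therefore no argument in the present paper to compare your proposal against.

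That said, your outline is correct and is essentially the standard proof one finds in the cited source. The sufficiency computation is accurate (your identity $\varphi(kt_0)P = PM^k$ is exactly what makes $\Gamma$ a subgroup, and the circle-over-torus fibration gives compactness), and in the necessity direction you have correctly isolated the only nontrivial step: that $\Gamma\cap\R^d$ is a lattice in $\R^d$, equivalently that $\pi(\Gamma)\subset\R$ is discrete. This is indeed the content of Mostow's theorem on intersections of lattices with normal subgroups in simply connected solvable Lie groups (see \cite[Chapter III]{Rag}), and once it is invoked the conjugation argument you sketch goes through verbatim.
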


Note that if $E:=P^{-1}\varphi(t_0)P$ then $\Gamma\cong \Z\ltimes_E \Z^d$, where the group multiplication in this last group is given by 
\[ (m,(p_1,\ldots,p_d))\cdot (n,(q_1,\ldots, q_d))=(m+n,(p_1,\ldots,p_d)+E^m(q_1,\ldots, q_d)). \]

\

\section{Hypercomplex structures on almost abelian Lie algebras}\label{sec3}
In this section we will study hypercomplex structures $\hcx$ on almost abelian Lie algebras $\g=\R \ltimes_A \R^{4n-1}$. In order to do this, 
we recall from \cite[Lemma 3.1]{AO} (see also \cite[Lemma 6.1]{LRV}) the characterization of almost abelian Lie algebras with a Hermitian structure: 
\begin{lemma}\label{hermitian}
Let $\g$ be an almost abelian Lie algebra with codimension one abelian ideal $\u$, admitting  a Hermitian structure  $(J, \pint)$. Then $\a:=\u \cap J\u $ is a $J$-invariant abelian ideal  of codimension $2$. Moreover, there exist an orthonormal basis $\{f_1,f_2=Jf_1\}$ of ${\a}^{\perp}$, $v_0\in\a$ and $\mu\in\R$ such that $f_2 \in \u$, $[f_1,f_2]=\mu f_2 + v_0$ and $\ad_{f_1}|_{\a}$ commutes with $J|_{\a}$.
\end{lemma}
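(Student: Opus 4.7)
The plan is to first pin down the dimension of $\a$ from a parity argument, then produce the basis of $\a^\perp$ by intersecting with $\u$ and using that $J$ is an isometry, and finally invoke the integrability of $J$ only at the end to get commutativity of $\ad_{f_1}|_\a$ with $J|_\a$. The Nijenhuis identity is used exactly once, for the very last step.

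To start, I would note that $\dim\g=4n$ is even (since $\g$ admits a complex structure), so $\dim\u=4n-1$ is odd. Therefore $J\u\neq\u$, for otherwise $J|_\u$ would be a complex structure on an odd-dimensional space. Consequently $\u+J\u=\g$, and by inclusion–exclusion $\dim\a=2(4n-1)-4n=4n-2$. The subspace $\a$ is manifestly $J$-invariant: if $x\in\u\cap J\u$ then $Jx\in J\u$, while $Jx\in J(J\u)=\u$. Since $J$ is an isometry, $\a^\perp$ is also $J$-invariant, of dimension~$2$.

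Next I would construct the basis. Since $\a\subset\u$ with $\dim\u-\dim\a=1$, the intersection $\u\cap\a^\perp$ is one-dimensional; I would take $f_2$ to be a unit vector spanning it and set $f_1:=-Jf_2$, so that $Jf_1=f_2$ and $\{f_1,f_2\}$ is an orthonormal basis of $\a^\perp$. Necessarily $f_1\notin\u$, otherwise $f_1$ would be a unit vector in $\u\cap\a^\perp=\R f_2$, contradicting $f_1\perp f_2$. Because $\u$ is an abelian ideal containing $f_2$, the bracket $[f_1,f_2]$ lies in $\u=\a\oplus\R f_2$, hence $[f_1,f_2]=\mu f_2+v_0$ with $\mu\in\R$ and $v_0\in\a$.

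For the last step I would use integrability. Given $y\in\a$, the vectors $Jf_1=f_2$ and $Jy$ both lie in $\u$, which is abelian, so the brackets $[Jf_1,y]$ and $[Jf_1,Jy]$ vanish; the condition $N_J(f_1,y)=0$ therefore reduces to $[f_1,y]+J[f_1,Jy]=0$, i.e.\ $[f_1,Jy]=J[f_1,y]$. Writing $[f_1,y]=cf_2+z$ with $z\in\a$, one computes $J[f_1,y]=-cf_1+Jz$, and since this equals $[f_1,Jy]\in\u$ while $f_1\notin\u$, the coefficient $c$ must vanish. Hence $\ad_{f_1}$ preserves $\a$ and commutes with $J|_\a$; together with $[\u,\a]=0$ this also proves that $\a$ is an ideal, completing the lemma. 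I expect the main delicate point to be this final component argument, which is where the integrability of $J$, the fact that $\a^\perp$ is $J$-invariant, and $f_1\notin\u$ must be combined simultaneously to force $c=0$.
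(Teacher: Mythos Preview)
Your proof is correct. The paper does not supply its own proof of this lemma: it is quoted from \cite{AO} (see also \cite{LRV}), so there is no argument in the paper to compare against. Your line of reasoning---parity forces $J\u\neq\u$, hence $\a$ has codimension~$2$; build $\{f_1,f_2\}$ from the one-dimensional space $\u\cap\a^\perp$; then use $N_J(f_1,y)=0$ together with $f_1\notin\u$ to kill the $f_2$-component of $[f_1,y]$---is exactly the standard one used in those references.

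One cosmetic remark: the lemma concerns only a complex structure, so $\dim\g$ is an arbitrary even number, not necessarily a multiple of~$4$. Writing the dimension as $4n$ is harmless for the argument (you only use that $\dim\u$ is odd), but it would be cleaner to write $\dim\g=2m$.
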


Given a hypercomplex structure $\hcx$ on $\R^{4k}$, we will denote by
\begin{equation}\label{GL_kH} \operatorname{GL}(k, \H):= \{T\in \operatorname{GL}(4k,\R) : TJ_\al =J_\al T \text{ for all } \al \}, 
\end{equation}
the quaternionic general linear group, with corresponding Lie algebra:
\[ \mathfrak{gl}(k,\mathbb{H})= \{T\in \mathfrak{gl}(4k,\R) : TJ_\al =J_\al T \text{ for all } \al \}. \]

\

The characterization of almost abelian Lie algebras admitting a hypercomplex structure is given in the next result.

\begin{theorem}\label{characterization}
Let $\g$ be a $4n$-dimensional almost abelian Lie algebra with codimension one abelian ideal $\u$,  admitting a hypercomplex structure $\hcx$. 
Let $\h := \u \cap J_1\u\cap J_2\u\cap J_3\u$ be the maximal $\hcx$-invariant subspace contained in $\u$. Then $\h$ is an abelian ideal of $\g$ and there exists a $\hcx$-invariant complementary 
subspace $\q= \operatorname{span}\{e_0,e_1,e_2,e_3\}$ of $\h$ with $e_{\al}=J_{\al}e_0$, such that $e_0\notin \u$, $e_\al \in \u$ and, moreover:
\begin{enumerate}
	\item[$\ri$] $[e_0,e_{\al}]=\mu e_{\al}+v_{\al}$ for some $\mu \in \R$ and $v_{\al}\in \h$, ${\al}=1,2,3$,
	\item[$\rii$] there exists $v_0\in \h$ such that $J_\al v_0=v_\al$ for all $\al$ and moreover,   $v_\al=J_\be v_\ga$ for any  cyclic permutation $(\al , \be , \ga)$  of $(1, 2, 3)$,
	\item[$\riii$] $[e_0,x]=Bx$ for any $x\in\h$, where $B\in\operatorname{End}(\h)$ satisfies $[B,J_\alpha|_\h]=0$, $\alpha=1,2,3$.
\end{enumerate}
\end{theorem}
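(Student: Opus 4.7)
The plan is to begin with a dimension count for $\h$. Pick $\phi \in \g^*$ with $\ker \phi = \u$; then $J_\al \u = \ker(\phi \circ J_\al)$, so $\h$ is the common kernel of the four functionals $\phi, \phi J_1, \phi J_2, \phi J_3$. These are linearly independent: any nontrivial relation would give a nonzero operator $L = aI + bJ_1 + cJ_2 + dJ_3$ with $L(\g) \subset \u$, contradicting the fact that such an $L$ is automatically invertible (nonzero real combinations of the quaternionic units always are). Hence $\dim \h = 4n - 4$. The quaternion relations $J_\al J_\be = \pm J_\ga$ then show $\h$ is $\hcx$-invariant, since each $J_\al$ permutes the four defining conditions up to sign, and $\h \subset \u$ is automatically abelian.

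Next I construct the complement $\q$. The same argument shows $\dim (J_1\u \cap J_2\u \cap J_3\u) = 4n-3$, strictly greater than $\dim \h$, so this intersection contains some $e_0 \notin \u$. Setting $e_\al := J_\al e_0 \in \u$ and $\q := \operatorname{span}\{e_0, e_1, e_2, e_3\}$, the subspace $\q$ is visibly $\hcx$-invariant. Now $\q \cap \h$ is $\hcx$-invariant of dimension at most $4$; but any $\hcx$-invariant subspace containing a nonzero $x$ contains the $4$-dimensional quaternionic line through $x$ (by the same invertibility fact), so $\q \cap \h$ is either $0$ or $\q$. The latter would force $e_0 \in \u$, so $\g = \q \oplus \h$ and $\q$ is the desired complement.

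The bracket relations then follow from the integrability of $\hcx$ combined with the abelianness of $\u$. For any cyclic $(\al,\be,\ga)$, two terms of $N_{J_\al}(e_0, e_\be) = 0$ vanish (the brackets $[e_\al, e_\be]$ and $[e_\al, J_\al e_\be] = [e_\al, e_\ga]$ are zero in $\u$), leaving $[e_0, e_\be] + J_\al[e_0, e_\ga] = 0$. Writing $[e_0, e_1] = \mu_1 e_1 + \mu_2 e_2 + \mu_3 e_3 + v_1$ with $v_1 \in \h$ (no $e_0$-term since $[e_0, e_1] \in \u$), these cyclic relations compute $[e_0, e_2]$ and $[e_0, e_3]$ in terms of the $\mu_i$ and $v_1$; requiring that these also lie in $\u$ forces $\mu_2 = \mu_3 = 0$. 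Setting $\mu := \mu_1$ and $v_0 := -J_1 v_1 \in \h$ yields \ri\ and \rii, with the identity $v_\al = J_\be v_\ga$ following from $J_\be J_\ga = J_\al$. For \riii, the analogous reduction of $N_{J_\al}(e_0, x) = 0$ for $x \in \h$ gives $[e_0, J_\al x] = J_\al[e_0, x]$; the left side lies in $\u$, so $[e_0, x] \in J_\al \u$ for every $\al$, hence $[e_0, x] \in \h$. This simultaneously proves $\h$ is an ideal and that $B := \ad_{e_0}|_\h$ commutes with each $J_\al|_\h$.

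I expect the main obstacle to be the bookkeeping in the last paragraph: eliminating the cross-coefficients $\mu_2, \mu_3$ and assembling the three vectors $v_\al$ into the quaternionic orbit of a single $v_0 \in \h$. Everything else is linear algebra driven by the invertibility of nonzero real quaternion combinations, and the integrability identities collapse dramatically because $\u$ is abelian, which should make these checks straightforward bracket computations once the right decompositions are in place.
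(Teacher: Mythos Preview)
Your proof is correct and takes a genuinely different route from the paper's. The paper introduces an auxiliary hyperhermitian inner product on $\g$, takes $e_0\in\u^\perp$, and then invokes Lemma~\ref{hermitian} (the Hermitian almost abelian characterization) for each $J_\al$ separately to obtain $[e_0,e_\al]=\mu_\al e_\al+v_\al$ and the commutation of $B$ with $J_\al|_\h$; the equalities $\mu_1=\mu_2=\mu_3$ and $v_\al=J_\be v_\ga$ then come from computing $N_{J_\be}(e_0,e_\al)$, with orthogonality used to show $v_\al\in\h$. You instead do the linear algebra directly: the codimension of $\h$ and the existence of $e_0\in J_1\u\cap J_2\u\cap J_3\u$ with $e_0\notin\u$ come from the independence of $\phi,\phi J_1,\phi J_2,\phi J_3$, and the bracket structure is read off from $N_{J_\al}(e_0,\cdot)=0$ once the abelianness of $\u$ kills two of the four terms. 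Your approach is more self-contained---no auxiliary metric, no external lemma---while the paper's buys an orthogonal decomposition $\q\perp\h$ for free, which is exactly what is needed later in Proposition~\ref{HKT}. The bookkeeping you flagged as the main obstacle (forcing $\mu_2=\mu_3=0$ via the $e_0$-component of $J_3[e_0,e_1]$ and $-J_2[e_0,e_1]$, and assembling $v_0=-J_1v_1$) goes through exactly as you describe.
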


In other words, $\g$ can be written as $\g=\R e_0\ltimes_A \R^{4n-1}$, where the matrix $A\in\mathfrak{gl}(4n-1,\R)$ defined by the adjoint action of $e_0$ on $\R^{4n-1}$ has the following expression in a basis $\{e_1, e_{2}, e_3\} \cup \mathcal B$ of $\u$,  where $\mathcal B$ is a basis of $\h$:
\begin{equation}\label{matrixL} 
A=\left[
	\begin{array}{ccc|ccc}      
		\mu & & & & &\\
		& \mu & & & 0 & \\
		& & \mu & & & \\
		\hline
		| & | & |& & & \\
		v_1 & v_2 & v_3 & & B &\\
		| & | & | & & &
	\end{array}
	\right], \qquad B\in\mathfrak{gl}(n-1,\mathbb{H})\subset \mathfrak{gl}(4n-4,\R).
\end{equation}

\begin{proof}[Proof of Theorem \ref{characterization}]
We consider  an auxiliary hyperhermitian inner product $\pint$ on $\g$. Let $e_0$ be a non-zero element in $\u^\perp$, so that $\u=e_0^\perp$, and let $e_{\al}=J_{\al}e_0$. This implies that $J_\al e_\be = e_\ga$ for any cyclic permutation $(\al , \be , \ga)$   of $(1, 2, 3)$. Since 
$\q= \operatorname{span}\{e_0,e_1,e_2,e_3\}$ and  $\q^{\perp}$ are $\hcx$-invariant, it follows that $\h=\q^{\perp}$. 

According to Lemma~\ref{hermitian}, for each $\al$, we have that $[e_0,e_{\al}]=\mu_{\al}e_{\al}+v_{\al}$, for some $\mu_{\al}\in \R$ and $v_{\al} \in (\operatorname{span}\{e_0,e_{\al}\})^{\perp}$. 
Let $(\al , \be , \ga)$ be a cyclic permutation of $(1, 2, 3)$. It follows from  \eqref{nijen} that 
\[ 0= N_{J_{\be}}(e_0, e_{\al}) = (\mu_{\al}-\mu_{\ga})e_{\al} +(v_\al-J_\be v_\ga) .   
\]
Since
\[ \la v_\al-J_\be v_\ga , e_\al \ra = \la v_\ga , J_\be e_\al \ra = - \la v_\ga , e_\ga \ra =0,
\]
we obtain that $\mu_{\al}=\mu_{\ga}$ and $v_\al=J_\be v_\ga$. Therefore
\[ \mu_1=\mu_2=\mu_3=:\mu ,
\]
and 
applying $J_\al$ on both sides of $v_\al=J_\be v_\ga$ we obtain $J_\al v_\al=J_\ga v_\ga$. It is easy to see that this is also equal to $J_\be v_\be$. 
Moreover, 
\[ \la v_\al , e_\be \ra =\la J_\be v_\al , J_\be e_\be \ra = \la -v_\ga , - e_0\ra =0,
\]
hence $v_\al \in \h,$ for all $\al$. Setting $-v_0= J_1 v_1=J_2v_2=J_3v_3 \in \h$, 
$\rii$ holds.

In order to show that $\h$ is an ideal, take $x\in \h$. Since $\h \subset \u\cap J_\al \u$  
then from Lemma~\ref{hermitian} it follows that $[e_0, x] \in \u\cap J_\al \u$. As this holds for any $\al$, we have  that $[e_0, x] \in \h$, thus $\h$ is an ideal. 

Finally, to prove $\riii$, let $B= \ad_{e_0}: \h \to \h$. For $x\in \h$ we have that $[e_0, J_\al x]=BJ_\al x$. Again, since $\h \subset \u\cap J_\al \u$, Lemma~\ref{hermitian}  implies that 
$[e_0, J_\al x]=J_\al [e_0 , x]= J_\al Bx$. Therefore, $BJ_\al x =  J_\al Bx$ for any $\al$, and $\riii$ follows.
\end{proof}

\medskip

\begin{remark}
It follows from the proof of Theorem \ref{characterization} that, given any hyperhermitian inner product on the hypercomplex almost abelian Lie algebra $\g$, the $\hcx$-invariant subspace $\mathfrak q$ can be chosen to be orthogonal to the $\hcx$-invariant ideal $\h= \u \cap J_1\u\cap J_2\u\cap J_3\u$.  
\end{remark}

\smallskip

\begin{remark}\label{basis} 
By fixing a basis $\mathcal B$ of $\h$ of the form  $\mathcal B =\{f_j \}\cup \{ J_1f_j \}
\cup \{ J_2f_j \} \cup \{ J_3f_j \}$, for $1\leq j\leq n-1$, the matrix $B$ in \eqref{matrixL} can be expressed as: 
\begin{equation}\label{matrixB}
B= \begin{bmatrix} X & -Y & -Z & -W \\
Y & X & W & -Z \\
Z & -W & X & Y \\
W & Z & -Y & X
\end{bmatrix}, \qquad   X, Y,Z,W \in \mathfrak{gl} (n-1, \R ).
\end{equation}
In this basis, the operators $J_\alpha : \h \to \h$   take the following form:
\begin{equation*}
J_1=\begin{bmatrix}  & -I & & \\
I & & & \\
 & & & -I \\
 & & I & 
\end{bmatrix}, \qquad 
J_2=\begin{bmatrix}  &  & -I &  \\
 & & & I \\
 I & & &  \\
 & -I &  & 
\end{bmatrix}, \qquad
J_3=\begin{bmatrix} & &  & -I   \\
 & &  -I & \\
  & I & &  \\
I  &  &  & 
\end{bmatrix},
\end{equation*}
where $I$ is the $(n-1)\times (n-1)$ identity  matrix.
\end{remark}

\medskip 

We show next that under certain assumptions we may perform a hyperholomorphic change of basis of $\g$ such that the vectors  $v_\al$ in Theorem ~\ref{characterization} vanish for all $\al$. 

\begin{proposition}\label{v=0}
With notation as in Theorem~\ref{characterization}, if $v_\al \in \operatorname{Im}(B-\mu I)$ for some $\al$, then there exists a $\hcx$-invariant complementary 
subspace $\q '= \operatorname{span}\{e_0',e_1',e_2',e_3'\}$ of $\h$ with $e_{\al}'=J_{\al}e_0'$ such that $e_0'\notin \u$, $e_\al '\in\u$, $[e_0',e_\al']=\mu e_\al '$ and $\ad_{e_0'}|_\h =B$. 

In particular, this holds if $\mu$ is not an eigenvalue of $B$.
\end{proposition}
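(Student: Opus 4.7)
The plan is to modify the splitting by an additive correction in $\h$: I would set $e_0' = e_0 + w_0$ for a to-be-determined $w_0 \in \h$, and then define $e_\alpha' = J_\alpha e_0' = e_\alpha + J_\alpha w_0$ for $\alpha=1,2,3$. By construction $\q' = \operatorname{span}\{e_0', e_1', e_2', e_3'\}$ is $\{J_\alpha\}$-invariant, $e_0' \notin \u$ (since $w_0 \in \u$ but $e_0 \notin \u$), and each $e_\alpha' \in \u$. The projection $\g \to \q$ along $\h$ carries $\{e_i'\}$ bijectively to $\{e_i\}$, so $\q' \oplus \h = \g$, giving the required complementary subspace for any choice of $w_0$.

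Next I would compute the bracket on $\q'$, using that $\u$ is abelian, that $\ad_{e_0}$ restricted to $\u$ sends $e_\alpha \mapsto \mu e_\alpha + v_\alpha$ and restricts to $B$ on $\h$, and crucially that $B$ commutes with each $J_\alpha|_\h$ by Theorem \ref{characterization}\,$\riii$:
\begin{equation*}
[e_0', e_\alpha'] = [e_0, e_\alpha] + [e_0, J_\alpha w_0] = \mu e_\alpha + v_\alpha + B J_\alpha w_0 = \mu e_\alpha + v_\alpha + J_\alpha B w_0 .
\end{equation*}
For this to equal $\mu e_\alpha' = \mu e_\alpha + \mu J_\alpha w_0$, I need $v_\alpha = J_\alpha(\mu I - B) w_0$, equivalently, applying $-J_\alpha$ and using Theorem \ref{characterization}\,$\rii$ in the form $-J_\alpha v_\alpha = v_0$,
\begin{equation*}
(\mu I - B)\, w_0 = v_0 .
\end{equation*}
So the whole problem reduces to solving this single equation in $\h$.

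The only real step is to check that the hypothesis $v_\alpha \in \operatorname{Im}(B-\mu I)$ \emph{for some} $\alpha$ forces $v_0 \in \operatorname{Im}(B - \mu I)$. Since $B$ commutes with $J_\alpha|_\h$, so does $B - \mu I$, hence $\operatorname{Im}(B-\mu I) \subset \h$ is $J_\alpha$-invariant; applying $-J_\alpha$ to $v_\alpha = J_\alpha v_0$ gives $v_0 = -J_\alpha v_\alpha \in \operatorname{Im}(B-\mu I)$, as desired. I can then solve $(\mu I - B) w_0 = v_0$ and fix any such $w_0$.

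Finally I would verify the remaining properties of the new splitting: $[e_0', x] = [e_0, x] + [w_0, x] = Bx$ for $x \in \h$ since $\u$ is abelian, so $\ad_{e_0'}|_\h = B$ is unchanged. The parenthetical final claim is immediate: if $\mu$ is not an eigenvalue of $B$, then $B - \mu I$ is invertible, $\operatorname{Im}(B-\mu I) = \h$, and the hypothesis holds automatically. I do not expect a serious obstacle; the only thing worth being careful about is correctly tracking the sign conventions linking $v_0$ and the $v_\alpha$ via $\rii$ so that the solvability condition is correctly identified with the stated image condition.
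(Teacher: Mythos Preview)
Your proposal is correct and follows essentially the same approach as the paper: shift $e_0$ by an element of $\h$ and let the hypercomplex structure carry this correction to the other basis vectors. The only cosmetic difference is that the paper solves $(B-\mu I)x_1=v_1$ and sets $e_0'=e_0+J_1x_1$ (which amounts to your $w_0=J_1x_1$), whereas you solve $(\mu I-B)w_0=v_0$ directly; these are equivalent via $v_1=J_1v_0$ and the commutation of $B-\mu I$ with $J_1$.
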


\begin{proof}
 Since $v_\al \in \operatorname{Im}(B-\mu I)$ and this subspace   is $\hcx$-invariant we have that $v_1,v_2,v_3 \in \operatorname{Im}(B-\mu I)$, due to Theorem~\ref{characterization} $\rii$. Let $x_1 \in\h$ such that $v_1= (B-\mu I) x_1$. Setting
 \[
 e_0'=e_0+J_1x_1,\quad e_1'=e_1-x_1,\quad e_2'=e_2-J_3x_1,\quad e_3'=e_3+J_2x_1,
 \]
 it is easily checked that all the conditions in the statement are satisfied. 
\end{proof}

\smallskip 

\begin{example}\label{dim-4}
When the hypercomplex almost abelian Lie algebra $\g$ has dimension $4$, that is,   $\h =\{0\}$,  $\g$ can be written as $\g=\R e_0\ltimes_A \R^3$ with $A=\mu I$ for some $\mu\in \R-\{0\}$. Moreover, according to Lemma \ref{ad-conjugated}, we may assume that $\mu =1$. We will denote this Lie algebra by $\mathfrak s_4$. It appears in the classification of $4$-dimensional hypercomplex Lie algebras given in \cite{Bar}, where it was  proved that $\mathfrak s_4$ admits a unique hypercomplex structure, up to equivalence.\footnote{Two hypercomplex structures $\hcx$ and $\{J_\al'\}$ on a Lie algebra $\g$ are equivalent if there exists a Lie algebra  automorphism $\psi$ of $\g$ such that $\psi J_\al =J_\al'\psi$ for all $\al$.} The simply connected Lie group $S_4$ corresponding to $\mathfrak s_4$ admits a left invariant Riemannian metric $g$ such that $(S_4,g)$ is isometric to the real hyperbolic space $\R H^4$ with its symmetric metric of constant sectional curvature equal to $-1$.
\end{example}

\medskip

\subsection{The Obata connection}  

We compute next the Obata connection of the hypercomplex structure on the almost abelian Lie algebra $\g$ from Theorem~\ref{characterization}. 
\begin{proposition}\label{Obata-flat}
Let $\g$ be an almost abelian Lie algebra with hypercomplex structure $\hcx$ as in Theorem~\ref{characterization}. Then the associated Obata connection is given by:
\[ \nabla _{e_0} e_0= \mu e_0 +v_0, \qquad \nabla_{e_0}u = [e_0, u], \qquad   \nabla_u v=0, \text{ for any }  u \in \u, \; v\in \g .
\]
In particular, $\nabla$ is flat.
\end{proposition}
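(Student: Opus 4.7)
The plan is to compute $\nabla_X Y$ directly on a basis of $\g$ using formula \eqref{obata-alpha}, and then read off flatness from the resulting expressions. I would fix the basis $\{e_0,e_1,e_2,e_3\}\cup\mathcal B$ supplied by Theorem~\ref{characterization} and Remark~\ref{basis}, and use throughout: $J_\al e_0=e_\al$ together with the quaternion rules $J_\al e_\be=-e_\ga$ for cyclic $(\al,\be,\ga)$ (while $J_\al e_\al=-e_0$); the identities $v_\al=J_\al v_0$ (so $J_\al v_\al=-v_0$ and $J_\be v_\ga=v_\al$); the brackets $[e_0,e_\al]=\mu e_\al+v_\al$ and $[e_0,x]=Bx$ with $[B,J_\al|_\h]=0$; and the fact that $\u$ is abelian.

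For $\nabla_{e_0}e_0$, only $[J_\al e_0,e_0]$ and $[e_0,J_\be e_0]$ contribute to \eqref{obata-alpha}; after applying the appropriate $J$ and using $J_\al^2 v_0=-v_0$, each contribution equals $\mu e_0+v_0$, so the total is $\mu e_0+v_0$. For $\nabla_{e_0}e_\al$ the only surviving bracket yields $-J_\be[e_0,J_\be e_\al]=\mu e_\al+J_\be v_\ga=\mu e_\al+v_\al=[e_0,e_\al]$. For $\nabla_{e_0}x$ with $x\in\h$, the choice $(\al,\be,\ga)=(1,2,3)$ reduces \eqref{obata-alpha} to $\tfrac12(Bx-J_2BJ_2x)=Bx$, since $[B,J_2]=0$ and $J_2^2=-I$.

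For $\nabla_u v$ with $u\in\u$ I would split $u$ into its $\operatorname{span}\{e_1,e_2,e_3\}$- and $\h$-components and let $v$ range over the basis. When $u\in\h$, every $J_\al u$ lies in the abelian ideal $\u$, so the only brackets in \eqref{obata-alpha} that can survive are those of shape $[J_\al u,e_0]=-BJ_\al u$; the commutation $[B,J_\al]=0$ then forces them to cancel. When $u=e_\al$, the only nonzero brackets are again of the form $[e_0,\cdot]$, and the identities $J_\al v_\al=-v_0$ and $J_\be v_\ga=v_\al$ produce the required pairwise cancellations. This is the most calculation-heavy part of the argument, and I expect the main obstacle to be simply organising the sign conventions so that the cyclic identities among the $v_\al$'s yield visible cancellations.

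Flatness is then immediate. Since $\nabla_u\equiv 0$ on $\g$ for every $u\in\u$ and $\u$ is an ideal, the curvature $R(X,Y)=\nabla_X\nabla_Y-\nabla_Y\nabla_X-\nabla_{[X,Y]}$ vanishes on $\u\times\u$. The only remaining case is $R(e_0,Y)Z=-\nabla_Y\nabla_{e_0}Z$ with $Y\in\u$: if $Z=e_0$ then $\nabla_{e_0}e_0=\mu e_0+v_0\in\R e_0\oplus\h$, and $\nabla_Y$ annihilates both $e_0$ and $v_0\in\u$; if $Z\in\u$ then $\nabla_{e_0}Z=[e_0,Z]\in\u$, again killed by $\nabla_Y$. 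Therefore $R\equiv 0$.
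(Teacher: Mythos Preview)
Your approach is correct and essentially identical to the paper's: direct computation of $\nabla$ on the basis via \eqref{obata-alpha}, followed by immediate flatness from $\nabla_u\equiv 0$ for $u\in\u$. Two small slips to fix in the write-up: with the paper's convention $J_1J_2=J_3$ one has $J_\al e_\be=+e_\ga$ (not $-e_\ga$) for cyclic $(\al,\be,\ga)$, and in $\nabla_{e_0}e_\al$ the bracket $[e_0,e_\al]$ itself also survives---both nonzero terms equal $\mu e_\al+v_\al$, and the factor $\tfrac12$ then gives the stated answer.
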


\begin{proof}
We compute first $\nabla_{e_0}e_0$ using \eqref{obata-alpha}:
\begin{align*}
\nabla_{e_0}e_0 & = \frac12(J_\al[e_\al,e_0]-J_\be[e_0,e_\be]) =\frac12(-J_\al(\mu e_\al+v_\al)-J_\be(\mu e_\be+v_\be))\\
& = \frac12(\mu e_0+v_0+\mu e_0+v_0) = \mu e_0+v_0.
\end{align*}
Now we compute, for $\al=1,2,3$, and $(\al,\be,\ga)$ a cyclic permutation of $(1,2,3)$,
\begin{align*}
\nabla_{e_0}e_\al & = \frac12([e_0,e_\al]-J_\be[e_0,J_\be e_\al]+J_\ga[e_\al,J_\be e_\al]) = \frac12 (\mu e_\al+v_\al-J_\be[e_0,-e_\ga]+J_\ga[e_\al,-e_\ga])\\
& = \frac12 (\mu e_\al+v_\al+J_\be (\mu e_\ga+v_\ga)) = \mu e_\al+v_\al =[e_0,e_\al].
\end{align*}
Since $\nabla_{e_0}e_\al=[e_0,e_\al]$ and the Obata connection $\nabla$ is torsion-free, we have that
$\nabla_{e_\al}e_0=0$.

Next,
\[
\nabla_{e_\al}e_\al  = \frac12(J_\al[-e_0,e_\al]+J_\ga[-e_0,-e_\ga]) = \frac12(\mu e_0+v_0-(\mu e_0+v_0))=0,
\]
and 
\begin{align*}
\nabla_{e_\al}e_\be & = \frac12(J_\al [-e_0,e_\be]-J_\be [e_\al,-e_0])  = \frac12 (-J_\al(\mu e_\be+v_\be)-J_\be(\mu e_\al+v_\al))\\
& = \frac12(-(\mu e_\ga+v_\ga)+(\mu e_\ga+v_\ga)) =0.
\end{align*}
Since $\nabla$ has no torsion, we have that $\nabla_{e_\be}e_\al=0$.

Finally, for $y\in \h$ we calculate
\[
\nabla_{e_0}y  = \frac12( [e_0,y]-J_\be[e_0,J_\be y])= \frac12(By-J_\be B J_\be y) = By = [e_0,y],
\]
since $B$ commutes with $J_\be$. As $\nabla_{e_0}y=[e_0,y]$, we have that $\nabla_y{e_0}=0,\, y\in\h$.

Also, for any $\al$ and $y\in \h$,
\begin{align*}
\nabla_{e_\al}y &= \frac12(J_\al[-e_0,y]+J_\ga[-e_0,J_\be y]) = \frac12(-J_\al By-J_\ga B J_\be y)\\
& = \frac12(-J_\al By-J_\ga J_\be B y) = \frac12(-J_\al By + J_\al By)  = 0.
\end{align*}
It follows that $\nabla_y e_\al=0$ and it is clear that $\nabla_xy=0$ for any $x,y\in\h$. 

The fact that the curvature $R$ of $\nabla$ vanishes is immediate.
\end{proof}

\begin{corollary}
The Obata connection on the simply connected Lie group $G$ associated to the hypercomplex almost abelian Lie algebra $\g$ is geodesically complete if and only if $\mu=0$. 
\end{corollary}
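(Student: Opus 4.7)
The plan is to reduce the geodesic equation for $\nabla$ to an explicit ODE on the Lie algebra $\g$ using left-invariance, and then analyze its solutions in the two cases $\mu=0$ and $\mu\neq 0$.

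First, I would invoke the standard fact that, for a left-invariant torsion-free connection on a Lie group $G$, a smooth curve $\gamma(t)$ is a geodesic of $\nabla$ if and only if the left-translated velocity $X(t):=(dL_{\gamma(t)^{-1}})\dot\gamma(t)\in\g$ satisfies
\[ \dot X(t) + \nabla_{X(t)}X(t)=0, \]
where $\nabla$ on the right-hand side denotes the bilinear operation on $\g$ induced by the connection on left-invariant vector fields. Hence $\nabla$ is geodesically complete on $G$ if and only if every maximal solution $X(t)$ of this ODE on $\g$ is defined on all of $\R$.

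Second, I would decompose $X(t)=x_0(t)e_0+u(t)$ with $u(t)\in\u$ and apply Proposition~\ref{Obata-flat}. Since $\nabla_{e_0}e_0=\mu e_0+v_0$, $\nabla_{e_0}u=[e_0,u]$, $\nabla_u e_0=0$ and $\nabla_u v=0$ for all $u\in\u$, $v\in\g$, a direct computation gives
\[ \nabla_XX=x_0^2(\mu e_0+v_0)+x_0\,[e_0,u]. \]
Because $v_0\in\h\subset\u$ and $[e_0,u]\in\u$ (as $\u$ is an ideal), the geodesic ODE decouples along $\R e_0\oplus\u$ into the scalar Riccati-type equation and an affine-linear equation for $u$:
\[ \dot x_0=-\mu x_0^2, \qquad \dot u=-x_0^2\,v_0-x_0\,[e_0,u]. \]

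Third, I would analyze these equations. If $\mu=0$, then $x_0(t)\equiv x_0(0)$ is constant, so the second equation is an inhomogeneous linear ODE on $\u\cong\R^{4n-1}$ with constant coefficients, whose solutions extend to all of $\R$; hence every geodesic is defined for all $t$ and $\nabla$ is complete. Conversely, if $\mu\neq 0$ and $x_0(0)=a\neq 0$, the scalar ODE has the explicit solution $x_0(t)=a/(1+\mu a t)$, which blows up at $t_*=-1/(\mu a)\in\R$; consequently, the geodesic issuing from the identity with initial velocity $a\,e_0$ cannot be extended past $t_*$, so $\nabla$ fails to be geodesically complete.

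The only subtlety is to justify that finite-time blow-up of $X(t)$ genuinely obstructs extension of $\gamma$ in $G$: if $\gamma$ extended smoothly up to $t_*$, then $X(t_*)=(dL_{\gamma(t_*)^{-1}})\dot\gamma(t_*)$ would be a well-defined element of $\g$, contradicting the blow-up of the unique maximal solution of the ODE. This closes the equivalence.
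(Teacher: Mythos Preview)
Your argument is correct and self-contained, but it differs from the paper's proof. The paper does not solve the geodesic equation directly; instead it observes that, since $\nabla$ is flat and torsion-free, the product $x\cdot y:=\nabla_x y$ defines a left-symmetric algebra structure on $\g$, and then invokes Segal's criterion \cite{Se}: $\nabla$ is geodesically complete if and only if every right multiplication $\rho(x):y\mapsto \nabla_y x$ is nilpotent. From Proposition~\ref{Obata-flat} one reads off that $\rho(x)$ vanishes on $\u$ and sends $e_0$ to $x_0\mu\, e_0+(\text{element of }\u)$, so all $\rho(x)$ are nilpotent precisely when $\mu=0$.

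Your direct approach essentially recovers this criterion in the case at hand: the Riccati equation $\dot x_0=-\mu x_0^2$ is exactly the obstruction to nilpotence of $\rho(e_0)$ on the $e_0$-line. The advantage of your route is that it avoids citing an external theorem and makes the blow-up completely explicit; the paper's route is shorter and more structural. One small point you leave implicit in the completeness direction is that global existence of $X(t)$ on $\R$ forces global existence of $\gamma(t)$ on $G$: this follows because any left-invariant Riemannian metric on $G$ is complete (being homogeneous), and the speed $|\dot\gamma(t)|=|X(t)|$ is bounded on compact time intervals, so $\gamma$ cannot escape to infinity in finite time. It would be worth stating this explicitly.
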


\begin{proof}
Let us define a product on $\g$ as follows: 
\[ \cdot:\g\times\g\to \g, \quad (x,y)\mapsto x\cdot y:=\nabla_x y. \]
Since $\nabla$ is torsion-free and flat, this product is known as a \textit{left-symmetric algebra} structure on $\g$ (LSA for short). The completeness of the connection $\nabla$ can be determined in terms of the LSA structure; indeed, $\nabla$ is geodesically complete if and only if all the right multiplications $\rho(x):\g\to\g$, $\rho(x)y=y\cdot x$, are nilpotent (see \cite{Se}). It is clear from Proposition \ref{Obata-flat} that the right multiplications of the LSA structure  determined by the Obata connection  are nilpotent if and only if $\mu=0$.
\end{proof}

\ 

\section{HKT metrics and the corresponding Bismut connection}\label{section-HKT}

In this section we characterize the hypercomplex almost abelian Lie algebras admitting an HKT metric and then we study the associated Bismut connection. 

\begin{proposition}\label{HKT}
Let $\g$ be a $4n$-dimensional almost abelian Lie algebra equipped with a hyperhermitian structure $(\hcx, \pint)$. Assume that the decomposition $\g=\mathfrak q\oplus \h$ from Theorem \ref{characterization} is orthogonal. Then the metric $\pint$ is HKT if and only if $v_\al=0$ for all $\al$  and $B$ is skew-symmetric.

Moreover, such an HKT metric is hyper-K\"ahler if and only if $\mu=0$. In other words, this HKT metric is hyper-K\"ahler   if and only if $\g$ is unimodular.
\end{proposition}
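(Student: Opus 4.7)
The plan is to apply the invariant characterization \eqref{inv_hkt} of the HKT condition. Setting
\[ F_\al(x,y,z) := \la [J_\al x, J_\al y], z\ra + \la [J_\al y, J_\al z], x\ra + \la [J_\al z, J_\al x], y\ra, \]
the hyperhermitian structure $(\hcx,\pint)$ is HKT iff $F_1(x,y,z)=F_2(x,y,z)=F_3(x,y,z)$ for all $x,y,z\in\g$. Each $F_\al$ is easily seen to be totally skew-symmetric, so it suffices to evaluate $F_\al$ on triples drawn from the adapted orthonormal basis $\{e_0,e_1,e_2,e_3\}\cup\mathcal B$, where $\mathcal B$ is an orthonormal basis of $\h$. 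Because $\u$ is abelian, the only non-zero brackets in $\g$ are $[e_0,e_\al]=\mu e_\al+v_\al$ and $[e_0,y]=By$ for $y\in\h$, which keeps the case analysis short.

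For the necessity, I would first evaluate $F_\al(e_0,e_1,z)$ with $z\in\h$. The crucial observation is that $J_1e_1=-e_0$ while $J_2e_1,J_3e_1\in\u$: since $\u$ is abelian and $BJ_\al z\in\h$ is orthogonal to $e_0$, every term in $F_2$ and $F_3$ vanishes, whereas in $F_1$ only the first term survives and produces $F_1(e_0,e_1,z)=\la v_1,z\ra$. Imposing $F_1=F_2$ forces $v_1=0$, and Theorem~\ref{characterization}\,$\rii$ propagates this to $v_2=v_3=0$. To force skew-symmetry of $B$, I would next evaluate $F_\al(e_1,y_1,y_2)$ with $y_1,y_2\in\h$; the same mechanism makes $F_2=F_3=0$, while a short manipulation using $[B,J_1]=0$ and the skew-symmetry of $J_1$ yields
\[ F_1(e_1,y_1,y_2) = \la y_1, (B+B^T)J_1 y_2\ra, \]
so $F_1=0$ for all $y_1,y_2$ forces $B+B^T=0$.

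For sufficiency, assume $v_\al=0$ and $B^T=-B$. I would then verify $F_1=F_2=F_3$ on every basis triple. The overwhelming majority of cases are trivial: either every bracket in the cyclic sum lies in $\u$ (and hence vanishes), or the surviving bracket lies in $\q$ or $\h$ while the third vector lies in the orthogonal complement. The only substantive verification is again on $(e_\al,y_1,y_2)$ with $y_i\in\h$, where the identity of the previous paragraph together with $B+B^T=0$ gives $F_\al(e_\al,y_1,y_2)=0$ for every $\al$.

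For the hyper-K\"ahler statement, I would compute $d\omega_\al$ via the Chevalley--Eilenberg formula $d\omega_\al(x,y,z)=-\omega_\al([x,y],z)+\omega_\al([x,z],y)-\omega_\al([y,z],x)$, using that all non-trivial brackets come from $\ad_{e_0}$. Closedness on $\h\times\h$ follows from $B^T=-B$ together with $[B,J_\al]=0$; closedness on mixed pairs $(e_\be,y)$ with $y\in\h$ is automatic from $\q\perp\h$; and on $(e_\be,e_\ga)$ the differential reduces to $d\omega_\al(e_0,e_\be,e_\ga)=-2\mu\la J_\al e_\be,e_\ga\ra$, which is nonzero whenever $\mu\neq 0$ (take $(\al,\be,\ga)$ a cyclic permutation of $(1,2,3)$). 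Hence $d\omega_\al=0$ for all $\al$ iff $\mu=0$. Finally, unimodularity of $\g$ amounts to $\tr(\ad_{e_0})=3\mu+\tr B=0$, and since $B$ skew-symmetric forces $\tr B=0$, this reduces to $\mu=0$. The principal obstacle is keeping the case analysis compact; the key observation that collapses it is that for each $\al$ only the single vector $e_\al$ among $\{e_1,e_2,e_3\}$ is mapped outside $\u$ by $J_\al$, which is what makes $F_1,F_2,F_3$ detect distinct pieces of the structure.
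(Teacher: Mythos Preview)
Your approach is essentially the same as the paper's: both proofs evaluate the cyclic sums $F_\al$ (denoted $S_\al$ in the paper) on well-chosen basis triples. You extract $v_\al=0$ from the triple $(e_0,e_1,z)$ with $z\in\h$, while the paper uses $(e_\al,e_\be,x)$; both work equally well. The treatment of $B+B^T=0$ via $(e_\al,y_1,y_2)$ is identical.

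There is one small gap in your sufficiency argument. Your blanket claim that, apart from $(e_\al,y_1,y_2)$, every remaining basis triple is ``trivial'' (because either all brackets lie in $\u$ or the surviving bracket pairs against an orthogonal vector) fails for the triple $(e_1,e_2,e_3)$: here $J_\al e_\al=-e_0$ produces brackets $\mu e_\be,\mu e_\ga$ that pair nontrivially with $e_\be,e_\ga$, and one computes $F_1=F_2=F_3=-2\mu\|e_\al\|^2$, which is nonzero when $\mu\neq 0$. The HKT condition still holds because the three values agree, but this case must be checked separately. The paper avoids this computation by invoking the known fact (from \cite{GT}) that every $4$-dimensional hyperhermitian structure is HKT, which settles all triples lying entirely in $\q$ at once.

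For the hyper-K\"ahler statement, you give a direct, self-contained computation of $d\omega_\al$, whereas the paper simply cites \cite{BDF} and \cite{Pa}. Your argument is correct and more elementary; it is essentially the content of those references specialized to this setting.
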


\begin{proof} Assume that the decomposition $\g=\mathfrak q\oplus \h$ from Theorem \ref{characterization} is orthogonal and that  $\pint$ is HKT. Then \eqref{inv_hkt} holds for all $x,y,z\in \g $, that is, $S_1 (x,y,z)=S_2 (x,y,z)=S_3 (x,y,z)$, where  
\[S_\al (x,y,z):=\la [J_\al x,J_\al y],z\ra +\la [J_\al y,J_\al z],x\ra+\la [J_\al z,J_\al x],y\ra. \]  
Note that   $S_\al (x,y,z)=0$ for all $x,y,z\in \h$, $\al=1,2,3$. Also, $S_\al (e_0,x,y)=0$ for all $x,y\in \h .$ Now let $(\al , \be , \ga )$ be a cyclic permutation of $(1,2,3)$, then 
$S_\al (e_\al , e_\be ,x)=S_\be (e_\al , e_\be ,x)=S_\ga (e_\al , e_\be ,x)$ for any  $x\in \h $. Let us compute:
\begin{eqnarray*}
S_\al (e_\al , e_\be ,x)&=&\la [J_\al e_\al ,J_\al e_\be ], x\ra +\la [J_\al e_\be ,J_\al x], e_\al \ra+\la [J_\al x,J_\al e_\al ], e_\be \ra \\
&=& \la [-e_0,e_\ga  ], x\ra +\la [e_\ga ,J_\al x], -e_0\ra +
\la [J_\al x,-e_0], e_\be\ra=-\la v_\ga , x\ra ,
\end{eqnarray*}
where we have used that $\q$ is orthogonal to $\h$. A similar calculation yields:
\[ S_\be (e_\al , e_\be ,x)= -\la v_\ga , x\ra , \quad x\in \h .
\]
On the other hand, 
\begin{eqnarray*}
S_\ga (e_\al , e_\be ,x)&=&\la [J_\ga e_\al ,J_\ga e_\be ], x\ra +\la [J_\ga e_\be ,J_\ga x], e_\al \ra+\la [J_\ga x,J_\ga e_\al ], e_\be \ra \\
&=& \la [e_\be ,-e_\al  ], x\ra +\la [-e_\al ,J_\al x], e_\al\ra +
\la [J_\al x,e_\be ], e_\be\ra=0 ,
\end{eqnarray*}
since all Lie brackets  vanish. Therefore, $\la v_\ga , x\ra=0$ for all $x\in\h$, which gives $v_\ga =0$, $\ga =1,2,3$. 
In particular, $\q$ is a $4$-dimensional Lie subalgebra of $\g$.  

It is easy to check that $S_\al (e_0 , e_\al ,x)=S_\be (e_0 , e_\al ,x)=S_\ga (e_0 , e_\al ,x)=0$ for all $x\in \h$. We compute next, for $x, y\in\h$:
\begin{eqnarray*}
S_\al (e_\al , x ,y)&=&\la [J_\al e_\al ,J_\al x ], y\ra +\la [J_\al x ,J_\al y], e_\al \ra+\la [J_\al y,J_\al e_\al ], x \ra \\
&=& \la [-e_0,J_\al x ], y\ra +\la [J_\al x ,J_\al y], e_\al \ra +
\la [J_\al y,-e_0], x\ra \\ 
&=& \la -BJ_\al x, y\ra + \la BJ_\al y, x\ra. 
\end{eqnarray*}
We also have:
\[
S_\be (e_\al , x ,y)=\la [J_\be e_\al ,J_\be x ], y\ra +\la [J_\be x ,J_\be y], e_\al \ra+\la [J_\be y,J_\be e_\al ], x \ra =0,
\]
since all Lie brackets vanish. In a similar way, $S_\ga (e_\al , x ,y)=0$. Therefore, \eqref{inv_hkt} implies that $\la -BJ_\al x, y\ra + \la BJ_\al y, x\ra=0$ for all $x,y\in \h$, that is, $BJ_\al +J_\al B^* =0$, where $B^*$ is the adjoint operator. Since $B$ commutes with $J_\al $, this implies $B+B^*=0$, hence $B$ is skew-symmetric. 

Conversely, let $(\hcx, \pint)$ be a hyperhermitian structure on $\g$ and assume that  the decomposition $\g=\mathfrak q\oplus \h$ is orthogonal with  $v_\al =0, \; \al =1,2,3,$ and skew-symmetric $B$. Then the  calculations above  imply that \eqref{inv_hkt} holds for $x\in \h,\, y,z\in \g$. Since any hyperhermitian structure in dimension 4 is HKT (see \cite{GT}), it follows that \eqref{inv_hkt} holds for $x, y,z\in \q$ and $(\hcx, \pint)$ is HKT.

\smallskip

It follows from \cite[Proposition 3.1]{BDF} (see also \cite[Theorem 5.1]{Pa})  that an HKT structure  $(\hcx, \pint)$ is hyper-K\"ahler if and only if $\mu=0$. In other words, this HKT structure is hyper-K\"ahler if and only if $\g$ is unimodular. 
\end{proof}

\begin{remark}
In \cite[Theorem 5.1]{Pa} there is a characterization of almost abelian Lie algebras admitting hyper-K\"ahler and locally conformally hyper-K\"ahler structures. In particular, it follows  that the HKT metrics arising in Proposition~\ref{HKT} with $\mu\neq 0$ are not locally conformally hyper-K\"ahler.
\end{remark}

\medskip

\subsection{The Bismut connection} 
We compute next the Bismut connection of the HKT structure on the almost abelian Lie algebra $\g$ from Proposition \ref{HKT}. 

\begin{proposition}\label{bismut}
Let $\g$ be an almost abelian Lie algebra with an HKT structure $(\hcx, \pint)$ as in Proposition \ref{HKT}. Then, the associated Bismut connection is given by:
\[ \nabla^b_{e_0}|_\q =0,\qquad \nabla^b_{e_0}|_\h =B, \qquad 
\nabla^b_{e_\al}|_\h = 0,
\]
\[ \nabla^b_{e_1}|_\q =\begin{bmatrix} & \mu & & \\
-\mu & & & \\
& & & -\mu \\
& & \mu & 
\end{bmatrix}, \quad 
\nabla^b_{e_2}|_\q =\begin{bmatrix} & & \mu &  \\
 & & &\mu \\
 -\mu & & & \\
 & -\mu & &
\end{bmatrix}, \quad 
\nabla^b_{e_3}|_\q =\begin{bmatrix} & & &\mu   \\
 & & -\mu & \\
 & \mu &  & \\
  -\mu & & &
\end{bmatrix}. 
\]
In particular, the curvature $R^b$ of $\nabla^b$ has the following form:
\[ R^b(e_0, e_\al )=-\mu \nabla^b_{e_\al }, \qquad  R^b(e_\al, e_\be )=2\mu \nabla^b_{e_\ga }, \qquad R^b(e_0, x )=0, \;\; x\in \h . \] 
Moreover, the torsion $3$-form $c$ of $\nabla^b$ is given by $c=2\mu e^1\wedge e^2\wedge e^3$,  and for $\mu\neq 0$, $c$  is non-closed, $\nabla^b c\neq 0$ and the Ricci tensor Ric$^b$ is symmetric.
\end{proposition}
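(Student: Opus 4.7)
The plan is to derive $\nabla^b$ from the defining relation $g(\nabla^b_X Y, Z) = g(\nabla^g_X Y, Z) + \tfrac12 c(X,Y,Z)$, with $c(X,Y,Z) = d\omega_1(J_1X, J_1Y, J_1Z)$, and then read off the curvature, torsion and Ricci data from the resulting explicit formulas.

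First I would apply the Koszul formula on the orthonormal basis $\{e_0,e_1,e_2,e_3\}\cup\mathcal B$ (chosen so that the decomposition $\g=\q\oplus\h$ is orthogonal, as in Proposition~\ref{HKT}). Since the only nonzero brackets are $[e_0,e_\al]=\mu e_\al$ and $[e_0,x]=Bx$ with $B+B^*=0$, the Koszul sums collapse at once: one obtains $\nabla^g_{e_\al}e_0=-\mu e_\al$, $\nabla^g_{e_\al}e_\al=\mu e_0$, $\nabla^g_{e_0}|_\h=B$, and all remaining components vanish. Next, computing $d\omega_1$ by the Chevalley--Eilenberg formula, the only nonvanishing values come from the brackets $[e_0,e_\al]=\mu e_\al$; a short evaluation yields $c=2\mu\, e^1\wedge e^2\wedge e^3$, in particular $c$ vanishes whenever any argument lies in $\h\cup \R e_0$, thanks to $v_\al=0$ and the skew-symmetry of $B$. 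Combining $\nabla^g$ with $\tfrac12\, g^{-1}c$ produces the displayed matrices for $\nabla^b_{e_0}$ and $\nabla^b_{e_\al}$.

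For the curvature, apply $R^b(X,Y)=[\nabla^b_X,\nabla^b_Y]-\nabla^b_{[X,Y]}$. Since $\nabla^b_{e_0}|_\q=0$ while $\nabla^b_{e_\al}$ preserves $\q$ and $\h$ with $\nabla^b_{e_\al}|_\h=0$, $R^b(e_0,e_\al)$ collapses to $-\nabla^b_{\mu e_\al}=-\mu\nabla^b_{e_\al}$; its $\h$-block vanishes using $[B,J_\al|_\h]=0$. For $R^b(e_\al,e_\be)$ the bracket $[e_\al,e_\be]=0$ reduces the computation to the pure matrix commutator $[\nabla^b_{e_\al}|_\q,\nabla^b_{e_\be}|_\q]$, which by direct block multiplication equals $2\mu\,\nabla^b_{e_\ga}|_\q$ (mirroring the quaternionic identity $J_\al J_\be-J_\be J_\al=2J_\ga$), and vanishes on $\h$. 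For $R^b(e_0,x)$ with $x\in\h$: one has $[e_0,x]=Bx\in\h$, and the two commutator terms together with $-\nabla^b_{Bx}$ cancel term by term using $[B,J_\al|_\h]=0$.

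Finally, the identity $c(X,Y,Z)=g(X,T^b(Y,Z))$ reproduces the displayed torsion $3$-form. The relation $de^\al=-\mu\, e^0\wedge e^\al$ (a one-line Chevalley--Eilenberg computation) immediately gives $dc=-6\mu^2\, e^0\wedge e^1\wedge e^2\wedge e^3\neq0$ when $\mu\neq0$, and a direct check such as $(\nabla^b_{e_1}c)(e_0,e_2,e_3)=2\mu^2\neq 0$ shows $\nabla^b c\neq 0$. The step I expect to require the most care is the symmetry of $\operatorname{Ric}^b$: I would compute $\operatorname{Ric}^b(X,Y)=\sum_i\langle R^b(E_i,X)Y,E_i\rangle$ over an orthonormal basis, exploiting that the trace splits cleanly into a $\q$-contribution (each $\nabla^b_{e_\al}|_\q$ is skew, and the quaternionic relations above give a manifestly symmetric sum) and an $\h$-contribution (which collapses because $\nabla^b_{e_\al}|_\h=0$). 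Should the bookkeeping become unwieldy, a cleaner route is the general identity $\operatorname{Ric}^b(X,Y)-\operatorname{Ric}^b(Y,X)=-\delta c(X,Y)$ for metric connections with skew torsion, together with a short Hodge-dual computation showing $\delta c=0$ directly from the simple form of $c$.
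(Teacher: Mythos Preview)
Your proposal is correct and follows essentially the same route as the paper: compute $\nabla^g$ (the paper quotes Milnor's formula rather than Koszul, but the outcome is identical), compute $c$ from $d\omega_\alpha$, add $\tfrac12 c$ to obtain $\nabla^b$, and then read off $R^b$, $dc$, and $\nabla^b c$ directly. The only substantive difference is the Ricci step: the paper simply writes down the Ricci operator $r^b|_\q=\operatorname{diag}(3\mu^2,-5\mu^2,-5\mu^2,-5\mu^2)$, $r^b|_\h=0$ and observes it is diagonal, whereas your fallback via $\operatorname{Ric}^b(X,Y)-\operatorname{Ric}^b(Y,X)=-\delta c(X,Y)$ together with $\delta c=0$ is a legitimate (and slightly slicker) alternative. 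Two small cleanups: in your curvature discussion the vanishing of the $\h$-blocks of $R^b(e_0,e_\al)$ and $R^b(e_0,x)$ follows simply from $\nabla^b_{e_\al}|_\h=0$ and $\nabla^b_x=0$ for $x\in\h$, not from $[B,J_\al|_\h]=0$; and the paper gets $(\nabla^b_{e_1}c)(e_0,e_2,e_3)=-2\mu^2$, so watch the sign.
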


\begin{proof}
We compute $\nabla^b$ using \eqref{Bismut}, so we start by recalling from \cite{Mi} that the Levi-Civita connection $\nabla^g$ in our particular case is given as follows:
\[ \nabla^g_{e_0} e_0=0, \quad \nabla^g_{e_0} u=A_au, \quad \nabla^g_{u} e_0= -A_su, \quad  \nabla^g_{u} v= \la A_su , v\ra e_0 ,\;\; u,v \in \u,
\]
where $A_a$ (resp. $A_s$) is the skew-symmetric (resp. symmetric) part of $A$, that is,
\[ A_a= \begin{bmatrix} 0 & \\
 & B 
\end{bmatrix} \qquad\qquad  A_s= \begin{bmatrix} \mu I & \\
 & 0 
\end{bmatrix}.
\]
Therefore, 
\begin{gather*}
\nonumber    \nabla^g_{e_0} |_\q=0, \quad \nabla^g_{e_0} |_\h=B, \\
\label{L_C}    \nabla^g_{e_\al} e_0= -\mu e_\al , \quad  \nabla^g_{e_\al} e_\al= \mu e_0, \quad  \nabla^g_{e_\al} e_\be=0, \quad \nabla^g_{e_\al} |_\h=0, \\ 
\nonumber \nabla^g_x =0, \; x\in \h  .
\end{gather*}
The torsion $3$-form $c$ can be computed  by $c(x,y,z)= d\omega_\al (J_\al x, J_\al y , J_\al z )$ for any $\al$. Take, for instance, $\al =1$ and it can be checked that $d\omega_1 (J_1 x, J_1 y , J_1 z )=0$  if $x\in \R e_0\oplus \h$, for all $y, z \in \g$. Therefore, if $e^j=\la e_j, \cdot \ra \in \g ^*$, then  $c$ is a multiple of  $e^1\wedge e^2\wedge e^3$,   so we compute    \begin{align*}d\omega_1 (J_1 e_1, J_1 e_2 , J_1 e_3 )&= d\omega_1 (-e_0,e_3, -e_2 )=-d\omega_1 (e_0,e_2,e_3) \\ & =
\omega_1 ([e_0,e_2], e_3)+ \omega_1 ([e_2,e_3], e_0)+\omega_1 ([e_3,e_0], e_2) \\ &= \omega_1 (\mu e_2, e_3)-\omega_1 (\mu e_3, e_2)=2 \mu ,\end{align*}
hence, $c=2\mu e^1\wedge e^2\wedge e^3 .$

Since $c(e_0,x,y)=0$ for all $x,y \in \g$, then $\nabla^b_{e_0}=\nabla^g_{e_0}$. Also, $c(x,y,z)=0$ for all $x\in \h, \; y, z \in \g$, so that $\nabla^b_{x}=\nabla^g_{x}=0$ for all $x\in \h $. Finally, we compute $\nabla^b_{e_\al}$. Again, $c(e_\al,x,y)=0$ for all $x \in \h,\; y \in \g$, then $\nabla^b_{e_\al} |_\h=\nabla^g_{e_\al} |_\h=0$. We compute next
\begin{align*}
    \la \nabla^b_{e_\al} e_0, e_0\ra &= \la \nabla^g_{e_\al} e_0, e_0\ra =-\mu \la e_\alpha , e_0\ra =0, \\
    \la \nabla^b_{e_\al} e_0, e_\al\ra &= \la \nabla^g_{e_\al} e_0, e_\al \ra =-\mu, \\
    \la \nabla^b_{e_\al} e_0, e_\be \ra &=\la \nabla^b_{e_\al} e_0, e_\ga \ra =0,
\end{align*} 
therefore, $\nabla^b_{e_\al} e_0=-\mu e_\al$. Since $J_\al$ is $\nabla^b$-parallel, we obtain:
\begin{align*}
\nabla^b_{e_\al} e_\al &= \nabla^b_{e_\al} J_\al e_0= J_\al \nabla^b_{e_\al} e_0=\mu e_0, \\
\nabla^b_{e_\al} e_\be &= \nabla^b_{e_\al} J_\be e_0= J_\be \nabla^b_{e_\al} e_0=\mu e_\ga.
\end{align*}
 Therefore, $\nabla^b$ is given as in the statement. The computation of $R^b$ follows from the expression of $\nabla^b$ by observing that 
\[ \nabla^b_{[e_0 ,e_\al ]} =\mu \nabla^b_{e_\al} ,  \qquad 
\nabla^b_{e_\al} \nabla^b_{e_\be}=\mu \nabla^b_{e_\ga} =-\nabla^b_{e_\be} \nabla^b _{e_\al} .
\]

It follows from $d e_\al =-\mu e^0 \wedge e^\al$ that
$dc= -6 \mu^2 e^0\wedge e^1\wedge e^2\wedge e^3$,
and using
\[ \nabla ^b _{e_1} e^1=-\mu e^0, \qquad \nabla ^b_{e_2} e^1=-\mu e^3, \qquad \nabla ^b_{e_1} e^3=\mu e^2, 
\]
it turns out that $\nabla^b_{e_1} c=-2\mu ^2 e^0\wedge e^2\wedge e^3$.

Finally, one can easily compute the Ricci tensor Ric$^b$:
\[ \operatorname{Ric}^b(x,y)= \la r^b (x),y \ra , \qquad x, y \in \g,
\] where the Ricci operator $r^b$  is given by:
\[ r^b|_\q =\operatorname{diag} \, (3\mu^2,-5\mu^2,-5\mu^2,-5\mu^2), \qquad \quad r^b|_\h = 0 .
\]
\end{proof}

\section{Eight-dimensional hypercomplex almost abelian Lie algebras and solvmanifolds}
\subsection{Classification of 8-dimensional hypercomplex almost abelian Lie algebras} \label{eight}
In this section we apply Theorem \ref{characterization} to obtain the classification of the $8$-dimensional almost abelian Lie algebras admitting hypercomplex structures (Theorem \ref{classification}). 

\medskip

Let $\g=\R e_0\ltimes_A \R^7$ be an 8-dimensional almost abelian Lie algebra equipped with a hypercomplex structure. It follows from Theorem \ref{characterization} that 
the matrix $A$ in \eqref{matrixL} takes the  form
\begin{equation}\label{matrix-dim8} 
A=\left[
	\begin{array}{ccc|ccc}       
		\mu & & & & &\\
		& \mu & & & 0 & \\
		& & \mu & & & \\
		\hline
		| & | & |& & & \\
		v_1 & v_2 & v_3 & & B &\\
		| & | & | & & &
	\end{array}
	\right], \quad \text{with} \quad v_\alpha\in\R^4,\, B\in \mathfrak{gl} (1,\H).
\end{equation} 
Moreover, following Remark \ref{basis}, there is a basis $\{f_1,f_2:=J_1f_1,f_3:=J_2f_1,f_4:=J_3f_1\}$ such that $B$ is given in this basis by:
\[
B= \begin{bmatrix} \lambda & -y & -z & -w \\
y & \lambda & w & -z \\
z & -w & \lambda & y \\
w & z & -y & \lambda
\end{bmatrix},  \quad \text{ for } \quad \lambda, y,z,w \in \R.
\]
In other words, $B=\lambda I+U$, where $U$ is a skew-symmetric $(4\times 4)$-matrix which commutes with $J_\alpha$ for any $\alpha$. Note that 
\[ \det U=\det(B-\lambda I)=(y^2+z^2+w^2)^2, \]
so that either $U$ is non-singular or $U=0$.
Also, the hypercomplex Lie algebra $\g=\R\ltimes_A\R^7$ is unimodular if and only if $3\mu+4\lambda=0$.

\medskip 

In order to classify these Lie algebras, we consider several cases:

\medskip 

\noindent $(1)$ \textsl{Case $U=0$ (or equivalently, $B=\lambda I$)}: 
\begin{itemize}
    \item If $\mu\neq \lambda$ we may assume $v_\alpha=0$ for all $\alpha$, according to Proposition \ref{v=0}. Then:
    \begin{enumerate}
        \item[$\ri$] If $\mu=0$, $\lambda\neq 0$ we may assume $\lambda=1$ and the only non-vanishing Lie brackets are:
        \[ [e_0,x]=x, \quad x\in\h. \]
        This Lie algebra is not unimodular and it is isomorphic to $\R^3 \times \s_5 $, where $\s_5 =\R e_0\ltimes \h$. The simply connected solvable Lie group $S_5$ with Lie algebra  $\s_5$ admits a left invariant Riemannian metric $g$ such that $(S_5, g)$ is isometric to the real hyperbolic space $ \R H^5$.
        \item[$\rii$] If $\mu\neq 0$ we may assume $\mu =1$, $\lambda \neq 1$, and the non-vanishing Lie brackets are:
        \[ [e_0,e_\al]=e_\al, \quad [e_0,x]=\lambda x, \quad \al=1,2,3, \quad x\in\h. \]
        Note that, according to Lemma \ref{ad-conjugated}, these Lie algebras are not isomorphic for different values of $\lambda$.
        If $\lambda=0$ then $\g=\mathfrak s_4\times \R^4$, where $\mathfrak s_4$ is the $4$-dimensional Lie algebra from Example \ref{dim-4}. Note that  $\g$ is unimodular if and only if $\lambda=-\frac34 $.
    \end{enumerate}
    
    \item If $\mu=\lambda=0$ then $v_\al\neq 0$ for all $\al$ (otherwise $\g$ would be abelian) and therefore $\g$ is $2$-step nilpotent with non-vanishing Lie brackets given by 
    \[ [e_0,e_\alpha]=v_\alpha, \quad \al=1,2,3.\] 
    This Lie algebra  appears in  the second place in the list of examples of Lie algebras with  first Betti number equal to $5$ given in \cite[page 56]{DF1}. Note that  any 8-dimensional hypercomplex nilpotent Lie algebra is either abelian or 2-step,  according to \cite[Theorem~2.1]{DF1}.
    \item If $\mu=\lambda\neq 0$ we may assume $\mu=\lambda=1$ and the only non-vanishing Lie brackets are
    \[ [e_0,e_\al]=e_\al+v_\al, \quad [e_0,x]=x, \quad \text{for} \quad \al=1,2,3 \quad \text{and} \quad  x\in\h.\] 
    In this case $\g$ is not unimodular. There are two isomorphism classes of Lie algebras in this family. When $v_\al =0$ for all $\al$, this Lie algebra is $\s_8$ corresponding to the  real hyperbolic space $\R H^8$. When $v_\al \neq 0$ for all $\al$, we can take $\{ v_0, v_1, v_2, v_3\}$ from Theorem \ref{characterization} as a basis of $\h$ and the Lie bracket is given by:
    \[ [e_0,e_\al]=e_\al+v_\al, \quad [e_0,v_0]=v_0, \quad [e_0,v_\al]=v_\al, \quad \text{for} \quad \al=1,2,3.
    \]
\end{itemize}

\medskip

\noindent $(2)$ \textsl{Case $U\neq 0$ (or equivalently, $\det U\neq 0$}): note that the eigenvalues of $B=\lambda I+U$ are  $\lambda\pm is$ with $s=(y^2+z^2+w^2)^{\frac12}\in \R-\{0\}$. Hence $B-\mu I$ is non-singular and therefore, according to Proposition \ref{v=0}, we may assume $v_\alpha=0$ for all $\alpha$. Moreover, $U$ is diagonalizable over $\C$ and there exists a real basis of $\h$, adapted to the hypercomplex structure, such that the matrix $B$ is given by  
\begin{equation}\label{B-dim8}
B= \begin{bmatrix} \lambda & -s &  &  \\
s & \lambda &  &  \\
 &  & \lambda & s \\
 &  & -s & \lambda
\end{bmatrix},  \quad \text{ for } \quad \lambda \in \R, \; s>0.
\end{equation}

\begin{itemize}
    \item If $\mu=\lambda=0$ then $\g$  is isomorphic to a semidirect product $\g=\R^3\times (\R e_0 \ltimes \R^4 )$, where $e_0$ acts on $\R^4$ by $B$ as in \eqref{B-dim8} with $ s=1, \, \lambda=0$ 
 due to Lemma \ref{ad-conjugated}.  This Lie algebra is unimodular.
    \item If $\mu=0$ and $\lambda\neq 0$, we may assume that  $\lambda=1$. Then $\g$ is isomorphic to $\R^3\times (\R e_0\ltimes_B \R^4)$, with $B$ as in \eqref{B-dim8} with $\lambda=1$. These Lie algebras are non-isomorphic for different values of $s>0$, and they are not unimodular.  
    \item If $\mu\neq 0$ we may assume $\mu=1$ and $\g$  is isomorphic to a semidirect product $\g=\mathfrak{s}_4\ltimes \R^4$, where $\mathfrak{s}_4$ is the $4$-dimensional Lie algebra from Example \ref{dim-4}, and  $e_0$ acts on $\R^4$ by $B$ as in \eqref{B-dim8}. These Lie algebras are non-isomorphic for different complex numbers $\lambda + is$ with $s>0$, and they are unimodular only when $\lambda=-\frac34$.  
\end{itemize} 

\

The paragraphs above can be summarized as follows, where the Lie brackets are given in terms of a basis $\{e_0,e_1,e_2,e_3, f_0, f_1, f_2, f_3\}$.

\begin{theorem}\label{classification} Let $\g$ be an $8$-dimensional almost abelian Lie algebra admitting a hypercomplex structure. Then $\g$ is isomorphic to one and only one of the  following Lie algebras:
\begin{enumerate}
    \item[] $\g_1:$ $[e_0,f_i]=f_i, \quad  0\leq i \leq 3$,
    \item[] $\g_2^\lambda:$ $[e_0,e_\al] = e_\al, \quad [e_0, f_i]=\lambda f_i, \quad \al=1,2,3, \quad 0\leq i \leq 3,\quad \lambda \in \R, $
    \item[] $\g_3:$ $[e_0, e_\al]=f_{\al},\quad \al =1,2,3$,
    \item[] $\g_4:$ $[e_0, e_\al]=e_\al +f_\al , \quad [e_0, f_i]=f_i, \quad \al =1,2,3,\quad 0\leq i \leq 3$,
    \item[] $\g_5:$ $[e_0, f_0]=f_1, \quad [e_0, f_1]=-f_0, \quad [e_0, f_2]=-f_3,\quad [e_0, f_3]=f_2$,
    \item[] $\g_6^s:$ $[e_0, f_0]=f_0+sf_1, \quad [e_0, f_1]=-sf_0+f_1,$ \\ \hspace*{.6cm} $[e_0, f_2]=f_2-sf_3,\quad [e_0, f_3]=sf_2+f_3, \quad\! s>0$,
    \item[] $\g_7^{\lambda,s}:$ $[e_0, e_\al]=e_\al, \quad \al=1,2,3,\quad [e_0, f_0]= \lambda  f_0+sf_1, \quad [e_0, f_1]=-sf_0+\lambda f_1,$ \\ \hspace*{.85cm} $[e_0, f_2]=\lambda f_2-sf_3,\quad [e_0, f_3]=sf_2+ \lambda f_3, \quad \lambda \in \R,\; s>0$.
    \end{enumerate}
    Moreover, the only unimodular Lie algebras are $\g_2^{-\frac34}, \; \g_3, \; \g_ 5$ and $\g_7^{{-\frac34},s}$.
\end{theorem}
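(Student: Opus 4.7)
The plan is to apply Theorem \ref{characterization} to every $8$-dimensional hypercomplex almost abelian Lie algebra $\g$, reducing its data to the single matrix $A\in\mathfrak{gl}(7,\R)$ of the form \eqref{matrix-dim8}, and then to carry out a systematic case split on the pair $(\mu,B)$. First, using Remark \ref{basis}, I would adapt the basis of $\h\cong\R^4$ so that $B=\lambda I+U$ with $U$ skew-symmetric and commuting with each $J_\al|_\h$. A direct computation gives $\det U=(y^2+z^2+w^2)^2$, so $U$ is either zero or invertible; this is the primary case split.

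Suppose $U=0$, so every eigenvalue of $B$ equals $\lambda$. If $\mu\neq\lambda$, Proposition \ref{v=0} lets me kill the $v_\al$, and Lemma \ref{ad-conjugated} then normalizes the overall scale --- setting $\lambda=1$ when $\mu=0$, or $\mu=1$ otherwise --- producing $\g_1$ and the $\g_2^\lambda$ with $\lambda\neq 1$. If $\mu=\lambda$, Proposition \ref{v=0} is unavailable and I must track the $v_\al$: for $\mu=\lambda=0$, non-zero $v_\al$ are forced (otherwise $\g$ is abelian) and Theorem \ref{characterization}(ii) lets me use $\{v_0,v_1,v_2,v_3\}$ as a basis of $\h$, yielding the $2$-step nilpotent $\g_3$; for $\mu=\lambda\neq 0$, I scale to $\mu=\lambda=1$, getting $\g_2^1$ when $v=0$ and $\g_4$ when $v\neq 0$ (again by the basis choice $\{v_0,\dots,v_3\}$).

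Now suppose $U\neq 0$, so the eigenvalues of $B$ are $\lambda\pm is$ with $s>0$; in particular, $B-\mu I$ is invertible. Proposition \ref{v=0} then kills all $v_\al$, and diagonalizing $U$ over $\C$ in a real basis of $\h$ adapted to $\hcx$ brings $B$ into the form \eqref{B-dim8}. The three subcases $\mu=\lambda=0$ (scale $s=1$), $\mu=0$ with $\lambda\neq 0$ (scale $\lambda=1$, leaving $s$ free), and $\mu\neq 0$ (scale $\mu=1$, leaving $\lambda,s$ free) deliver $\g_5$, $\g_6^s$ and $\g_7^{\lambda,s}$ respectively.

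Pairwise non-isomorphism of the seven families, and of distinct parameter values within each family, follows from Lemma \ref{ad-conjugated} by comparing the spectra and Jordan types of the matrices $A$ modulo the rescaling $A\mapsto cA$ with $c\neq 0$. Unimodularity amounts to the single linear condition $\tr A=3\mu+4\lambda=0$, which picks out $\g_2^{-\frac34}$, $\g_3$, $\g_5$ and $\g_7^{-\frac34,s}$. I expect the main obstacle to be the case $\mu=\lambda$ where Proposition \ref{v=0} cannot be invoked: there the bookkeeping of the vectors $v_\al$ together with the careful use of Theorem \ref{characterization}(ii) is what distinguishes $\g_3$, $\g_2^1$ and $\g_4$.
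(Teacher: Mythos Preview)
Your proposal is correct and follows essentially the same approach as the paper: the paper also splits first on whether $U=0$ or $U$ is invertible, then within the $U=0$ case splits on $\mu\neq\lambda$ versus $\mu=\lambda$, invokes Proposition~\ref{v=0} precisely when $\mu$ is not an eigenvalue of $B$, and uses Lemma~\ref{ad-conjugated} both for the normalizations and (implicitly) for the non-isomorphism statements. Your identification of $\g_2^1$ as the $v=0$ subcase of $\mu=\lambda=1$ matches the paper's $\mathfrak s_8$, and your anticipated ``main obstacle'' --- the $\mu=\lambda$ case where one must track $v_\al$ by hand and exploit Theorem~\ref{characterization}\rii\ to take $\{v_0,v_1,v_2,v_3\}$ as a basis of $\h$ --- is exactly where the paper does its only case-by-case bookkeeping.
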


\medskip

\subsection{Existence of lattices and associated solvmanifolds}\label{section-lattices}

In this section we will prove (Corollary \ref{lattices-dim8}) that only two of the simply connected Lie groups corresponding to the Lie algebras in Theorem \ref{classification} admit lattices.  We then exhibit a countable family of pairwise non-homeomorphic hypercomplex nilmanifolds (Proposition \ref{G3}) and we also show that any 8-dimensional compact flat hyper-K\"ahler manifold can be realized as an almost abelian solvmanifold with an invariant hyper-K\"ahler structure (Theorem \ref{8dim-HK}). 

\smallskip

We begin with the following non-existence result. 

\begin{proposition}\label{lattices}
Let $\g$ be an almost abelian Lie algebra   isomorphic to $\g_2^{-\frac34}$ or $\g_7^{{-\frac34},s}$ for some $s>0$. 
Then its associated simply connected Lie group $G$ does not admit lattices.
\end{proposition}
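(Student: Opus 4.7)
The plan is to invoke Proposition~\ref{latt}: since both Lie algebras are unimodular (as recorded at the end of Theorem~\ref{classification}), the associated simply connected group $G=\R\ltimes_{\varphi}\R^{7}$ admits a lattice if and only if there exists $t_0\neq 0$ such that $\varphi(t_0)=\exp(t_0 A)$ is conjugate in $\operatorname{GL}(7,\R)$ to a matrix in $\operatorname{GL}(7,\Z)$. Since conjugation preserves characteristic polynomials, the characteristic polynomial of $\varphi(t_0)$ would then lie in $\Z[x]$, and in particular every eigenvalue of $\varphi(t_0)$ would be an algebraic integer (recall also that $\det\varphi(t_0)=e^{t_0\,\tr(A)}=1$ automatically by unimodularity). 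I will show, in each of the two cases, that this arithmetic restriction forces every eigenvalue to be $1$, i.e.\ $t_0=0$, a contradiction.

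For $\g=\g_2^{-\frac34}$, the matrix $A$ is diagonal with eigenvalues $1$ (multiplicity $3$) and $-\tfrac34$ (multiplicity $4$), so $\varphi(t_0)$ has eigenvalues $a:=e^{t_0}$ and $b:=e^{-3t_0/4}$ of multiplicities $3$ and $4$, subject to $a^{3}b^{4}=1$. Let $m_a\in\Z[x]$ be the minimal polynomial of $a$ over $\Q$; since its roots are Galois conjugates of $a$ they must lie in $\{a,b\}$. If $m_a(x)=x-a$, then $a\in\Z_{>0}$, and the analogous statement for $m_b$ forces $b\in\Z_{>0}$, so $a^{3}b^{4}=1$ gives $a=b=1$. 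Otherwise $m_a(x)=(x-a)(x-b)$, so $a+b$ and $N:=ab$ are positive integers; rewriting $a^{3}b^{4}=N^{3}b=1$ gives $b=N^{-3}$ and $a=N^{4}$, whence $a+b=N^{4}+N^{-3}\in\Z$ forces $N=1$, yielding again $a=b=1$. Either way $t_0=0$.

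For $\g=\g_7^{-\frac34,s}$, the matrix $A$ is diagonalizable over $\C$ with eigenvalues $1$ (multiplicity $3$) and $-\tfrac34\pm is$ (each of multiplicity $2$), so $\varphi(t_0)$ has eigenvalues $a:=e^{t_0}$, $c:=e^{(-3/4+is)t_0}$ and $\bar c$ of multiplicities $3$, $2$ and $2$, with $|c|^{2}=a^{-3/2}$. If $st_0\in\pi\Z$, then $c\in\R$ with $c=\pm a^{-3/4}$, and the argument of the previous paragraph applies to the pair $(a,c)$ using $a^{3}c^{4}=1$, again forcing $a=1$. If $st_0\notin\pi\Z$, then $c\notin\R$ and the minimal polynomial $m_a\in\Z[x]$ of $a$ has real coefficients with roots in $\{a,c,\bar c\}$; hence either $m_a(x)=x-a$ or $m_a(x)=(x-a)(x-c)(x-\bar c)$. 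In the first subcase $a\in\Z_{>0}$ and $|c|^{2}=a^{-3/2}$ is a positive integer (being the constant term of the minimal polynomial of $c$), which forces $a=1$. In the second subcase the constant term of $m_a$ equals $-a|c|^{2}=-a^{-1/2}\in\Z$, so $a^{-1/2}\in\Z_{>0}$; then $a\in\Q$, and since $a$ is also an algebraic integer, $a\in\Z_{>0}$, so $a^{-1/2}\in\Z_{>0}$ forces $a=1$. In every branch $t_0=0$, a contradiction.

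The main technical obstacle will be the complex case $c\notin\R$ of $\g_7^{-\frac34,s}$: one must keep track of which Galois conjugates of the real eigenvalue $a$ can occur in the spectrum (using reality of $m_a$ to pair $c$ with $\bar c$), and then use the single Diophantine consequence $a^{-1/2}\in\Z$, extracted from the constant term of $m_a$ together with the determinant relation $|c|^{2}=a^{-3/2}$, to pin $a$ down to $1$.
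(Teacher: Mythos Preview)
Your proof is correct and follows a different route from the paper's. The paper treats both Lie algebras at once: it observes that the characteristic polynomial $p(x)\in\Z[x]$ of $\varphi(t_0)$ has the triple real root $e^{t_0}$ together with four further roots all of modulus $e^{-3t_0/4}$, and then shows $p$ is \emph{irreducible} over $\Z$ by noting that any proper monic factor $q\in\Z[x]$ would satisfy $|q(0)|=(e^{t_0})^k(e^{-3t_0/4})^j=1$ with $0\le k\le 3$, $0\le j\le 4$, $k+j>0$, forcing $4k=3j$ and hence $(k,j)=(3,4)$. Irreducibility of $p$ over $\Q$ then contradicts the triple root $e^{t_0}$ (equivalently, it forces $p$ to coincide with the minimal polynomial of the matrix $\varphi(t_0)$, which has $e^{t_0}$ only as a simple root).

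Your argument instead analyses the minimal polynomial over $\Q$ of the single eigenvalue $a=e^{t_0}$, exploiting the product relations $a^{3}b^{4}=1$ and $|c|^{2}=a^{-3/2}$ case by case. This is more elementary in that it avoids the fact (cited in the paper from \cite[Appendix B]{Bo}) that the matrix minimal polynomial divides the characteristic polynomial with integer quotient; the price is a longer case split, especially for $\g_7^{-3/4,s}$. One small point: when you reduce the subcase $st_0\in\pi\Z$ to ``the argument of the previous paragraph'', note that $c=\pm e^{-3t_0/4}$ may be negative, so the positivity assertions on $b$ there should be relaxed to $c\in\Z\setminus\{0\}$; the conclusion $a=1$ is unaffected, since $c^{4}>0$ and in the quadratic case $a=N^{4}$ regardless of the sign of $N=ac$.
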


\begin{proof}
The Lie algebra $\g $ as in the statement is a semidirect product  $\g=\R e_0 \ltimes _A \R^7$, with $A=D+S$, where $D$ and $S$ are   given by:
\begin{equation*} D=  \operatorname{diag}\, \left(1,1,1,-\frac34,-\frac34,-\frac34,-\frac34\right), \qquad \quad
S=\left[	\begin{array}{c|c}  
0_3 & \\
\hline 
  & U \end{array}\right], 
\end{equation*}
for a skew-symmetric $(4\times 4)$-matrix $U$, where  $0_3$ is  the $3\times 3$ zero matrix. 

The simply connected Lie group associated to $\g$ can be written as $G=\R\ltimes_\varphi \R^7$, where $\varphi:\R\to \operatorname{GL}(7,\R)$ is the homomorphism given by $\varphi(t)=\exp(tA)$. According to 
Proposition \ref{latt}, the Lie group $G$ admits lattices if and only if there exists $t_0\neq 0$ such that $\varphi(t_0)$ is conjugate to an integer matrix. 

Let us assume that such $t_0\neq 0$ exists. Note that
\begin{equation}\label{phi-t0}
\varphi(t_0)=\exp(t_0A)=\left[
	\begin{array}{ccc|ccc}       
		e^{t_0} & & & & &\\
		& e^{t_0} & & & & \\
		& & e^{t_0} & & & \\
		\hline
		& & & & & \\
	    & & & & e^{-\frac34 t_0}\exp(t_0U) &\\
		& & & & &
	\end{array}
	\right].
\end{equation}
Therefore the characteristic polynomial $p(x)$ of $\varphi(t_0)$ has integer coefficients:
\[ p(x)=x^7-m_6 x^6+m_5x^5+\cdots+m_1x-1\in\Z[x].\]
Note that $p(x)$ has a triple root $e^{t_0}$, and the other roots are complex numbers with the same length, namely $e^{-\frac34 t_0}$ (since $\exp(t_0U)$ is orthogonal and its eigenvalues have length 1). 

\medskip 

\textsl{Claim:} $p(x)$ is irreducible over $\Z[x]$. Indeed, assume that $p=qr$ with $q,r\in\Z[x]$, $\deg q>0$. Then $q(0)r(0)=p(0)=-1$, so that $q(0)=\pm 1$, $r(0)=\mp 1$. Since $q(0)$ is the product up to sign of its roots, which form a subset of the roots of $p$, we have that
\[ 1=|q(0)|=(e^{t_0})^k(e^{-\frac34 t_0})^j =(e^{t_0})^{k-\frac34 j}, \quad \text{with} \quad 0\leq k\leq 3, \, 0\leq j\leq 4, \, k+j>0.\]
Since $t_0\neq 0$, this implies that $4k=3j$, in particular $k\equiv 0 \, (\operatorname{mod} 3)$, and therefore the only possible option is $k=3, j=4$. Thus, $q=p,\, r=1$ and $p$ is irreducible.

\medskip 

Now, let $m(x)$ denote the minimal polynomial of $\varphi(t_0)$. It is known that $m(x)\in \Z[x]$ and, moreover, $p(x)=m(x)h(x)$ with $h(x)\in\Z[x]$ (see, for instance,   \cite[Appendix B]{Bo}). Since $p(x)$ is irreducible over $\Z[x]$ we have that $p(x)=m(x)$. However, it follows from \eqref{phi-t0} that the factor $(x-e^{t_0})$ appears three times in the factorization of $p(x)$, whereas it only appears once in the factorization of $m(x)$. This is a contradiction and therefore we must have $t_0=0$. As a consequence, $G$ does not admit lattices.
\end{proof}

\smallskip

\begin{corollary}\label{lattices-dim8}
Let $G$ be a simply connected  8-dimensional almost abelian Lie group equipped with a left invariant hypercomplex structure. If $G$ admits lattices then its Lie algebra $\g$ is isomorphic to either $\g_3$ or $\g_5$ from Theorem \ref{classification}.  
\end{corollary}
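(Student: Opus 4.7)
The plan is to combine two restrictions on lattices with the classification in Theorem~\ref{classification} and the non-existence result in Proposition~\ref{lattices}. First I would invoke the fact, recalled in \S\ref{sec2}, that every connected Lie group admitting a lattice must be unimodular (\cite{Mi}). Applied to our setting, this immediately rules out every entry of the list of Theorem~\ref{classification} whose associated Lie algebra fails to satisfy $\tr(\ad_{e_0}) = 0$. Looking at the explicit brackets, one checks that $\g_1$ is not unimodular (trace $4$), $\g_2^\lambda$ is unimodular only when $\lambda=-3/4$, $\g_4$ has trace $3 + 4 + 3 = 10\neq 0$, $\g_6^s$ has trace $4$, and $\g_7^{\lambda,s}$ is unimodular only when $\lambda=-3/4$; on the other hand $\g_3$ (nilpotent), $\g_5$, $\g_2^{-3/4}$ and $\g_7^{-3/4,s}$ are unimodular. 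So after this step the list of candidates narrows to these four families, exactly as asserted at the end of Theorem~\ref{classification}.

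Next I would eliminate the two remaining non-nilpotent families $\g_2^{-3/4}$ and $\g_7^{-3/4,s}$ by appealing to Proposition~\ref{lattices}, which was established precisely for this purpose: its statement says that the simply connected Lie groups corresponding to these Lie algebras admit no lattices. Thus none of the Lie algebras $\g_1,\g_2^\lambda,\g_4,\g_6^s,\g_7^{\lambda,s}$ (for any allowable parameters) can have its associated simply connected group admit a lattice, either because unimodularity fails or because Proposition~\ref{lattices} applies.

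The only Lie algebras surviving both obstructions are $\g_3$ and $\g_5$, giving the desired conclusion. No additional calculation should be needed beyond the trace check in the first step; the genuine content of the corollary is really packaged inside Proposition~\ref{lattices}, so this corollary is essentially a bookkeeping statement that collects the classification (Theorem~\ref{classification}), the unimodularity criterion, and the non-existence proposition into a single clean consequence. The only point to watch is to make sure I have correctly identified all unimodular members of the seven families in Theorem~\ref{classification}, but since the matrix $A$ of $\ad_{e_0}|_\u$ is given explicitly in each case, this is a routine verification.
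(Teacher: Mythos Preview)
Your argument is correct and mirrors the paper's proof: reduce to the unimodular members of the list in Theorem~\ref{classification} and then invoke Proposition~\ref{lattices} to discard $\g_2^{-3/4}$ and $\g_7^{-3/4,s}$. Two small remarks: your trace for $\g_4$ should be $3+4=7$ rather than $10$ (the $f_\alpha$ in $[e_0,e_\alpha]=e_\alpha+f_\alpha$ is off-diagonal), though this does not affect the conclusion; and the paper's proof goes a step further by verifying that $G_3$ and $G_5$ \emph{do} admit lattices (via Theorem~\ref{Mal} for $G_3$ and via explicit values $t_m=\frac{2\pi}{m}$, $m\in\{1,2,3,4,6\}$, in Proposition~\ref{latt} for $G_5$), thereby showing the corollary is not vacuous---strictly speaking this is not required by the stated implication, but you may wish to include it.
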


\begin{proof}
The only unimodular Lie algebras in Theorem \ref{classification} are $\g_2^{-\frac34},\g_3, \g_5$ and $\g_7^{-\frac34,s}$. It follows from Proposition \ref{lattices} that a simply connected Lie group with Lie algebra  $\g_2^{-\frac34}$ or  $\g_7^{-\frac34,s}$ does not admit lattices.

For the nilpotent Lie algebra $\g_3$, since its structure constants are rational numbers we have that the associated simply connected Lie group $G_3$ admits lattices, according to Theorem \ref{Mal}. 

For $\g_5$, the corresponding matrix $A$ is given by\footnote{For  square matrices $X,Y$, we denote  $X\oplus Y=\begin{bmatrix}
    X&0 \\ 0&Y
\end{bmatrix}$. We use a similar notation for 3 or more matrices.}
\begin{equation}\label{A-HK} A=\left[0_3\right] \oplus \begin{bmatrix}0&-1 \\1&0 \end{bmatrix} \oplus 
\begin{bmatrix} 0& 1\\ -1&0 \end{bmatrix}. 
\end{equation}
Let us consider $t_m=\frac{2\pi}{m}$ for $m\in \{1,2,3,4,6\}$. It is easy to verify that the matrix $\exp(t_mA)$ is either integer or conjugate to an integer matrix (see the discussion preceding Theorem \ref{8dim-HK} below). Therefore, according to Proposition \ref{latt}, the simply connected Lie group $G_5$ associated to $\g_5$ admits lattices.
\end{proof}

\medskip

In the remainder of this section we carry out a detailed analysis of the lattices in the Lie groups $G_3$ and $G_5$  and we study  topological aspects of the corresponding nil- and solvmanifolds.  

\begin{proposition}\label{G3}\
\begin{enumerate}
    \item[$\ri$] For any lattice $\Gamma$ in $G_3$, the Betti numbers $b_j$ of $\Gamma\backslash G_3$ are given by
    \[ b_0=b_8=1, \quad b_1=b_7=5, \quad b_2=b_6=16, \quad b_3=b_5=30, \quad b_4= 36. \]
    \item[$\rii$] For any $k\in \N$ there is a lattice $\Gamma_k$ in $G_3$ such that  the nilmanifolds $\Gamma_k\backslash G_3$ are pairwise non-homeomorphic.
\end{enumerate} 
\end{proposition}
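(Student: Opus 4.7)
For part (i), I would invoke Nomizu's theorem \cite{Nom} to reduce to Lie algebra cohomology: $H^*_{\rm dR}(\Gamma\backslash G_3)\cong H^*(\g_3)$, so the Betti numbers are independent of $\Gamma$. Observing that $f_0$ is central and appears in no Lie bracket, $\g_3$ decomposes as a direct sum of Lie ideals $\g_3=\tilde{\g}\oplus\R f_0$, where $\tilde{\g}=\operatorname{span}\{e_0,e_1,e_2,e_3,f_1,f_2,f_3\}$. By the K\"unneth formula, $b_k(\g_3)=b_k(\tilde{\g})+b_{k-1}(\tilde{\g})$, and by Poincar\'e duality on the unimodular $7$-dimensional Lie algebra $\tilde{\g}$, $b_k(\tilde{\g})=b_{7-k}(\tilde{\g})$; thus only $b_0,b_1,b_2,b_3$ of $\tilde{\g}$ need be determined. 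In the Chevalley-Eilenberg complex of $\tilde{\g}$, with dual basis $\{x^0,x^1,x^2,x^3,y^1,y^2,y^3\}$, the only non-trivial differentials are $dy^\al=-x^0\wedge x^\al$; direct enumeration in $\Lambda^k\tilde{\g}^*$ yields $b_0=1,\,b_1=4,\,b_2=12,\,b_3=18$, which combined with K\"unneth produces the claimed Betti numbers for $\g_3$.

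For part (ii), I would apply Proposition \ref{latt} to construct explicit lattices. The adjoint matrix $A=\ad_{e_0}|_{\u}$ is nilpotent with $A^2=0$, so $\varphi(t)=\exp(tA)=I+tA$, and $\varphi(k)=I+kA$ has integer entries in the standard basis of $\u\cong\R^7$ for every $k\in\N$. Proposition \ref{latt} then yields the lattice $\Gamma_k=k\Z\ltimes\Z^7\subset G_3$, which, since $G_3$ is $2$-step nilpotent, is generated by the elements $g_0:=\exp(ke_0)$, $g_\al:=\exp(e_\al)$, $h_0:=\exp(f_0)$, $h_\al:=\exp(f_\al)$ for $\al=1,2,3$, whose only non-trivial commutator is $[g_0,g_\al]=\exp(k[e_0,e_\al])=h_\al^k$.

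To distinguish the resulting nilmanifolds, I would compute the abelianization $\Gamma_k^{\rm ab}\cong H_1(\Gamma_k\backslash G_3;\Z)$. Passing to $\Gamma_k^{\rm ab}$ imposes the relation $h_\al^k=1$, yielding $\Gamma_k^{\rm ab}\cong\Z^5\oplus(\Z/k\Z)^3$. Since the torsion subgroup of $H_1$ is a homeomorphism invariant and $(\Z/k\Z)^3$ has cardinality $k^3$, the nilmanifolds $\Gamma_k\backslash G_3$ are pairwise non-homeomorphic. The main technical obstacle is the Chevalley-Eilenberg bookkeeping in part (i), especially for $b_3(\tilde{\g})$ within the $35$-dimensional space $\Lambda^3\tilde{\g}^*$; part (ii) is then comparatively direct.
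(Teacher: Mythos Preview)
Your proposal is correct and follows essentially the same route as the paper: for $\ri$ both arguments reduce to Lie algebra cohomology via Nomizu's theorem and compute in the Chevalley--Eilenberg complex (your K\"unneth splitting along the central factor $\R f_0$ is a mild simplification of the direct computation the paper alludes to), and for $\rii$ both construct the lattices $\Gamma_k=k\Z\ltimes_{\varphi(k)}\Z^7$ and distinguish them via $\Gamma_k^{\mathrm{ab}}\cong\Z^5\oplus(\Z_k)^3$. The only cosmetic difference in $\rii$ is that the paper obtains the commutator subgroup by invoking \cite[Proposition~4.10]{Tol2}, whereas you read it off directly from the $2$-step BCH relation $[\exp(ke_0),\exp(e_\al)]=\exp(kf_\al)$.
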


\begin{proof}
For $\ri$, the Betti numbers $b_j, \, 0\leq j\leq 4$, can be computed using the Chevalley-Eilenberg complex of $\g_3$, due to \eqref{deRham}. The other Betti numbers can be deduced by Poincaré duality.

For $\rii$, the almost abelian Lie algebra $\g_3=\R e_0\ltimes_A \R^7$ is determined by the matrix 
\[ A=\left[
	\begin{array}{ccc|cccc}       
		& & & & & &\\
		& 0_3 & & & & &\\
		& &  & & & &\\
		\hline
		0 & 0 & 0 & & & &\\
	    1 & 0 & 0 & & & 0_4 &\\
		0 & 1 & 0 & & & &\\
		0 & 0 & 1 & & & &
	\end{array}
	\right]. \]
Therefore, $G_3=\R\ltimes_{\varphi} \R^7$ with 
\[ \varphi(t)=\exp(tA)=\left[
	\begin{array}{ccc|cccc}       
		1& & & & & &\\
		& 1 & & & & &\\
		& & 1 & & & &\\
		\hline
		0 & 0 & 0 & 1 & & &\\
	    t & 0 & 0 & & 1 & &\\
		0 & t & 0 & & & 1 &\\
		0 & 0 & t & & & & 1
	\end{array}
	\right].\]
For each $k\in\N$, let us denote $E_k:=\varphi(k)$; note that $E_k\in \operatorname{SL}(7,\Z)$. Thus, $\Gamma_k:=k\Z\ltimes_{E_k} \Z^7$ is a lattice in $G_3$, which is isomorphic to $\Gamma_k\cong \Z\ltimes_{E_k}\Z^7$ in a natural way. It follows from \cite[Proposition 4.10]{Tol2} that $[\Gamma_k,\Gamma_k]=0\Z\oplus \operatorname{Im}(I-E_k)=\{(0,0,0,0,0,kp,kq,kr)\mid p,q,r\in \Z \}$. Therefore we obtain
\begin{equation}\label{abel}
\Gamma_k/[\Gamma_k,\Gamma_k]\cong \Z^5\oplus (\Z_k)^3, \end{equation} 
which implies that the lattices are pairwise non-isomorphic. Since $\Gamma_k=\pi_1(\Gamma_k\backslash G_3)$, the proof is   complete.
\end{proof}

\begin{remark}
Due to Hurewicz theorem and \eqref{abel}, we have that $H_1(\Gamma_k\backslash G_3,\Z)\cong \Z^5\oplus (\Z_k)^3$.
\end{remark}

\medskip

We consider next solvmanifolds associated to the Lie group $G_5$ from the proof of Corollary \ref{lattices-dim8}. Since the matrix $A$ in \eqref{A-HK} which gives rise to $\g_5$ is skew-symmetric with $\mu=0$, the   following result is a consequence of Proposition \ref{HKT} (see also \cite[\S3.1]{BDF}).

\begin{corollary}
Any hypercomplex solvmanifold $\Gamma\backslash G_5$ admits a flat hyper-K\"ahler metric. 
 
\end{corollary}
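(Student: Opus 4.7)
The plan is to verify that $\g_5$ satisfies the hypotheses of Proposition \ref{HKT} with the canonical inner product, thereby producing a left invariant hyper-Kähler metric on the simply connected Lie group $G_5$, and then to conclude flatness by combining the fact that in the hyper-Kähler case the Levi-Civita connection coincides with the Obata connection with the flatness result of Proposition \ref{Obata-flat}.

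More concretely, I would first equip $\g_5$ with the inner product $\pint$ that makes $\{e_0,e_1,e_2,e_3,f_0,f_1,f_2,f_3\}$ orthonormal. With the natural hypercomplex structure from Theorem \ref{characterization}, this is hyperhermitian, and the decomposition $\g_5=\q\oplus\h$ with $\q=\operatorname{span}\{e_0,e_1,e_2,e_3\}$ and $\h=\operatorname{span}\{f_0,f_1,f_2,f_3\}$ is orthogonal by construction. Reading off the brackets defining $\g_5$, we see that $[e_0,e_\alpha]=0$ so that $\mu=0$ and $v_\alpha=0$, while the restriction of $\ad_{e_0}$ to $\h$ is the skew-symmetric block $\begin{bmatrix}0&-1\\1&0\end{bmatrix}\oplus\begin{bmatrix}0&1\\-1&0\end{bmatrix}$ appearing in \eqref{A-HK}; in particular $B$ is skew-symmetric. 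The hypotheses of Proposition \ref{HKT} are therefore met, so $(\{J_\al\},\pint)$ is HKT, and since $\mu=0$ (equivalently, $\g_5$ is unimodular, as recorded in Theorem \ref{classification}), the second part of Proposition \ref{HKT} promotes this structure to a hyper-Kähler one on $\g_5$.

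Passing to the Lie group, the induced left invariant hyper-Kähler structure on $G_5$ descends to any quotient $\Gamma\backslash G_5$ by a lattice (whose existence was established in Corollary \ref{lattices-dim8}), giving an invariant hyper-Kähler metric on the solvmanifold. For flatness I would invoke the standard fact recalled in \S\ref{c&hc}: on a hyper-Kähler manifold the Levi-Civita connection agrees with the Obata connection. By Proposition \ref{Obata-flat} the Obata connection of this (or indeed any) hypercomplex almost abelian Lie algebra is flat, so the same is true of the Levi-Civita connection, and flatness descends to $\Gamma\backslash G_5$. There is no substantive obstacle here; the work is entirely in matching the normal form of Theorem \ref{characterization} to the explicit description of $\g_5$, after which Propositions \ref{HKT} and \ref{Obata-flat} do the rest.
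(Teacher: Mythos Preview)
Your argument is correct and matches the paper's one-line justification: the paper simply observes that the matrix $A$ in \eqref{A-HK} defining $\g_5$ is skew-symmetric with $\mu=0$, so Proposition~\ref{HKT} applies directly to give an invariant hyper-K\"ahler metric. The only minor difference is in how flatness is obtained: the paper implicitly relies on the general fact recorded in \S\ref{c&hc} that left invariant hyper-K\"ahler metrics are flat (via Alekseevskii--Kimel'fel'd, since Ricci-flat homogeneous metrics are flat), whereas you deduce it from the coincidence of the Levi-Civita and Obata connections together with Proposition~\ref{Obata-flat}; both routes are already available in the paper and lead to the same conclusion.
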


\begin{remark}
If $\Gamma$ is a lattice in $G_5$ then $\Gamma$ is the fundamental group of a compact flat manifold, so it is isomorphic to a  Bieberbach group, i.e. a co-compact torsion-free discrete subgroup of the isometry group of $\R^8$ with the standard flat metric \cite{Bieb1,Bieb2, Ch}. 
\end{remark}

\medskip

The classification of $8$-dimensional compact flat hyper-K\"ahler manifolds was carried out by Whitt in \cite{Whitt}, where exactly twelve of them were determined, up to affine diffeomorphism. They are 
 distinguished by their Riemannian holonomy group and the abelianization of their fundamental group, that is, their first homology group. We list them in Table \ref{tabla}.

 \smallskip
 \begin{table}[ht] 
\begin{tabular}{|c|c|c||c|c|c|} \hline
 $M$ & $\operatorname{Hol}(M,g)$ & $H_1(M,\Z)$ & $M$ & $\operatorname{Hol}(M,g)$ & $H_1(M,\Z)$
 \\ \hline
 $M_1$ & $\{e\}$ &  $\Z^8$ & $M_{3,1}$ & $\Z_3$ & $\Z^4 \oplus \Z_3$\\
 $M_{2,0}$ & $\Z_2$ & $\Z^4 \oplus (\Z_2)^4$ & $M_{3,2}$ & $\Z_3$ & $\Z^4 $ \\
 $M_{2,1}$ & $\Z_2$ & $\Z^4 \oplus (\Z_2)^3$ & $M_{4,0}$ & $\Z_4$ & $\Z^4 \oplus (\Z_2)^2$ \\
 $M_{2,2}$ & $\Z_2$ & $\Z^4 \oplus (\Z_2)^2$ & $M_{4,1}$ & $\Z_4$ & $\Z^4 \oplus \Z_2$ \\
 $M_{2,3}$ & $\Z_2$ & $\Z^4 \oplus \Z_2$ & $M_{4,2}$ & $\Z_4$ & $\Z^4 $ \\
$M_{3,0}$ & $\Z_3$ & $\Z^4 \oplus (\Z_3)^2$ & $M_6$ & $\Z_6$ & $\Z^4$ \\ 
 \hline
\end{tabular} 
\caption{\label{tabla} $8$-dimensional compact flat hyper-K\"ahler manifolds.}
\end{table}

\smallskip

Next we analyze  in detail the lattices in $G_5$ and establish a correspondence between the associated solvmanifolds with those obtained by Whitt. 

\medskip

Recall the notation in the proof of Corollary \ref{lattices-dim8}. We note first that if $\Gamma_m$ denotes a lattice in $G_5$ determined by $t_m=\frac{2\pi}{m}$, for $m\in \{1,2,3,4,6\}$, then the holonomy group of $\Gamma_m\backslash G_5$ is $\Z_m$ for $m\neq 1$ and $\{e\}$ for $m=1$, according to  \cite[Theorem 3.7]{Tol1}. 

We will denote by $\varphi:\R\to \operatorname{GL}(7,\R)$ the group homomorphism given by  $\varphi(t)=\exp(t A)$, with $A$ as in \eqref{A-HK}. 

\smallskip 

When $m=1$ we have $\varphi(t_1)=\varphi(2\pi)=I$, so that the lattice $\Gamma_{1}=2\pi\Z \ltimes_{\varphi(2\pi)} \Z^7$ is abelian. Therefore, according to Theorem \ref{solv-isom}, $\Gamma_{1}\backslash G_5$ is the $8$-dimensional flat torus, corresponding to the manifold $M_1$ in Table \ref{tabla}.

\smallskip 

When $m=2$, so that $t_2=\pi$, the associated solvmanifolds have holonomy $\Z_2$. Let us denote 
\[ E_{2,0}:=\exp(\pi A)= \operatorname{diag}(1,1,1,-1,-1,-1,-1)\in \operatorname{SL}(7,\Z).\]
Therefore a lattice in $G_5$ is given by $\Gamma_{2,0}=\pi \Z\ltimes_{E_{2,0}}\Z^7\cong \Z\ltimes_{E_{2,0}}\Z^7$. Consider next the following matrices in $\operatorname{SL}(7,\Z)$:
\begin{gather*}
E_{2,1}=\begin{bmatrix}
1 & 0 \\ 1 & -1 \end{bmatrix}\oplus [-I_3]\oplus [I_2], \quad E_{2,2}=\begin{bmatrix}
1 & 0 \\ 1 & -1 \end{bmatrix}\oplus\begin{bmatrix}
1 & 0 \\ 1 & -1 \end{bmatrix}\oplus [-I_2]\oplus [1], \\
E_{2,3}=\begin{bmatrix}
1 & 0 \\ 1 & -1 \end{bmatrix}\oplus\begin{bmatrix}
1 & 0 \\ 1 & -1 \end{bmatrix}\oplus\begin{bmatrix}
1 & 0 \\ 1 & -1 \end{bmatrix}\oplus [-1].
\end{gather*}
All of them are conjugate to $E_{2,0}$ in $\operatorname{GL}(7,\R)$ since they have the same Jordan form, which is exactly $E_{2,0}$. Therefore there are matrices $P_{2,j}\in \operatorname{GL}(7,\R)$ such that $\Gamma_{2,j}:=\pi \Z\ltimes_{\varphi(\pi)} P_{2,j}\Z^7\cong \Z\ltimes_{E_{2,j}}\Z^7$ are lattices in $G_5$, for $j=1,2,3$. Moreover, these lattices are pairwise non-isomorphic for $j=0,\ldots,3$, since it follows from \cite[Proposition 4.10]{Tol2} that $[\Gamma_{2,j},\Gamma_{2,j}]=0\Z\oplus \operatorname{Im}(I_7-E_{2,j})$ and using this it is easy to establish that
\[ \Gamma_{2,j}/[\Gamma_{2,j},\Gamma_{2,j}]\cong \Z^4\oplus (\Z_2)^{4-j}, \quad 0\leq j \leq 3.  \]
The four solvmanifolds $\Gamma_{2,j}\backslash G_5$ correspond to $M_{2,j}$  in Table \ref{tabla} for $0\leq j\leq 3$. 

\smallskip

When $m=4$, so that $t_4=\frac{\pi}{2}$, the associated solvmanifolds have holonomy $\Z_4$. Let us denote
\[ E_{4,0}:=\exp\left(\frac{\pi}{2} A\right)= \left[I_3\right] \oplus \begin{bmatrix}0&-1 \\1&0 \end{bmatrix} \oplus 
\begin{bmatrix} 0& 1\\ -1&0 \end{bmatrix}\in \operatorname{SL}(7,\Z).\]
Therefore a lattice in $G_5$ is given by $\Gamma_{4,0}=\pi \Z\ltimes_{E_{4,0}}\Z^7\cong \Z\ltimes_{E_{4,0}}\Z^7$. Consider next the following matrices in $\operatorname{SL}(7,\Z)$:
\[
E_{4,1}=[I_2]\oplus\begin{bmatrix} 0& 1\\ -1&0 \end{bmatrix}
\oplus \begin{bmatrix}
1 & 0 & 0  \\ -1 & 0 & 1 \\ 0 & -1 & 0 \end{bmatrix}, \quad E_{4,2}=[1]\oplus \begin{bmatrix} 0 & 0 & 0 & 0 & -1 & 0 \\ 0 & 0 & 0 & 0 & 0 & -1 \\ 0 & 1 & 0 & 0 & 0 & 1 \\ -1 & 0 & 0 & 0 & -1 & 0 \\ 0 & 0 & 0 & 1 & 1 & 0 \\ 0 & 0 & -1 & 0 & 0 & 1 \\ \end{bmatrix}.
\]
The matrices $E_{4,0}, E_{4,1}$ and $E_{4,2}$ are pairwise conjugate in $\operatorname{GL}(7,\R)$ since they have the same Jordan form. Therefore there are matrices $P_{4,j}\in \operatorname{GL}(7,\R)$ such that $\Gamma_{4,j}:=\frac{\pi}{2} \Z\ltimes_{\varphi(\frac{\pi}{2})} P_{4,j}\Z^7\cong \Z\ltimes_{E_{4,j}}\Z^7$ are lattices in $G_5$, for $j=1,2$. Moreover, these lattices are pairwise non-isomorphic for $j=0,1,2$, since it is easily verified that 
\[ \Gamma_{4,j}/[\Gamma_{4,j},\Gamma_{4,j}]\cong \Z^4\oplus (\Z_2)^{2-j}, \quad 0\leq j \leq 2.  \]
The three solvmanifolds $\Gamma_{4,j}\backslash G_5$ correspond to $M_{4,j}$  in Table \ref{tabla} for $0\leq j\leq 2$.

\smallskip

When $m=3$, so that $t_3=\frac{2\pi}{3}$, the associated solvmanifolds have holonomy $\Z_3$. We have that
\[ \varphi\left(\frac{2\pi}{3} \right)= \left[I_3\right] \oplus \begin{bmatrix} -\frac12 & -\frac{\sqrt 3}{2} \\ \frac{\sqrt 3}{2} & -\frac12 \end{bmatrix} \oplus \begin{bmatrix} -\frac12 & \frac{\sqrt 3}{2} \\ -\frac{\sqrt 3}{2} & -\frac12 \end{bmatrix}, \]
and it is easy to verify that this matrix is conjugate to
\[ E_{3,0}:=[I_3]\oplus \begin{bmatrix} 0 & -1 \\ 1 & -1 \end{bmatrix} \oplus \begin{bmatrix} 0 & 1 \\ -1 & -1 \end{bmatrix}\in \operatorname{SL}(7,\Z), \] 
Therefore a lattice in $G_5$ is given by $\Gamma_{3,0}=\frac{2\pi}{3} \Z\ltimes_{\varphi(\frac{2\pi}{3})}P_{3,0}\Z^7\cong \Z\ltimes_{E_{3,0}}\Z^7$ for some  $P_{3,0}\in \operatorname{GL}(7,\R)$. Consider next the following matrices in $\operatorname{SL}(7,\Z)$:
\[
E_{3,1}=[I_2]\oplus\begin{bmatrix} -1 & -1 \\ 1 & 0 \end{bmatrix} \oplus \begin{bmatrix}
0 & 0 & 1  \\ -1 & 0 & 0 \\ 0 & -1 & 0 \end{bmatrix}, \quad E_{3,2}=[1]\oplus \begin{bmatrix} 0 & -1 & 0 \\ 0 & 0 & 1 \\ -1 & 0 & 0 \end{bmatrix}\oplus\begin{bmatrix}
0 & 1 & 0 \\ 0 & 0 & -1 \\ -1 & 0 & 0 \end{bmatrix}.
\]
The matrices $E_{3,0}, E_{3,1}$ and $E_{3,2}$ are pairwise conjugate in $\operatorname{GL}(7,\R)$ since they have the same Jordan form. Therefore there are matrices $P_{3,j}\in \operatorname{GL}(7,\R)$ such that $\Gamma_{3,j}:=\frac{2\pi}{3} \Z\ltimes_{\varphi(\frac{2\pi}{3})} P_{3,j}\Z^7\cong \Z\ltimes_{E_{3,j}}\Z^7$ are lattices in $G_5$, for $j=1,2$. Moreover, these lattices are pairwise non-isomorphic for $j=0,1,2$, since it is easily verified that 
\[ \Gamma_{3,j}/[\Gamma_{3,j},\Gamma_{3,j}]\cong \Z^4\oplus (\Z_3)^{2-j}, \quad 0\leq j \leq 2.  \]
The three solvmanifolds $\Gamma_{3,j}\backslash G_5$ correspond to $M_{3,j}$  in Table \ref{tabla} for $0\leq j\leq 2$.

Finally, for $m=6$ we have $t_6=\frac{\pi}{3}$ and the associated solvmanifolds have holonomy $\Z_6$. We have that
\[ \varphi\left(\frac{\pi}{3} \right)= \left[I_3\right] \oplus \begin{bmatrix} \frac12 & -\frac{\sqrt 3}{2} \\ \frac{\sqrt 3}{2} & \frac12 \end{bmatrix} \oplus \begin{bmatrix} \frac12 & \frac{\sqrt 3}{2} \\ -\frac{\sqrt 3}{2} & \frac12 \end{bmatrix}, \]
and it is easy to verify that this matrix is conjugate to
\[ E_{6}:=[I_3]\oplus \begin{bmatrix} 0 & -1 \\ 1 & 1 \end{bmatrix} \oplus \begin{bmatrix} 0 & 1 \\ -1 & 1 \end{bmatrix}\in \operatorname{SL}(7,\Z). \] 
Therefore a lattice in $G_5$ is given by $\Gamma_{6}=\frac{\pi}{3} \Z\ltimes_{\varphi\left(\frac{\pi}{3} \right)}P_6\Z^7\cong \Z\ltimes_{E_{6}}\Z^7$ for some  $P_{6}\in \operatorname{GL}(7,\R)$. It is readily verified that 
\[ \Gamma_{6}/[\Gamma_{6},\Gamma_{6}]\cong \Z^4.  \]
The associated solvmanifold $\Gamma_{6}\backslash G_5$ corresponds to  $M_6$ in Table \ref{tabla}.

\

Summarizing we have:

\begin{theorem}\label{8dim-HK}
Any $8$-dimensional compact flat hyper-K\"ahler manifold is a solvmanifold $\Gamma\backslash G_5$ equipped with an invariant hyper-K\"ahler structure.
\end{theorem}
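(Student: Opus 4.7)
The plan is to observe that the theorem is essentially a tabulation of the explicit construction carried out in the paragraphs immediately preceding it. First, I would note that by the Corollary preceding the theorem, every solvmanifold of the form $\Gamma\backslash G_5$ carries an invariant flat hyper-K\"ahler structure induced by the left invariant hyper-K\"ahler structure on $G_5$ (since the matrix $A$ in \eqref{A-HK} is skew-symmetric with $\mu=0$, one applies Proposition \ref{HKT}). Thus any lattice in $G_5$ automatically yields a compact flat hyper-K\"ahler $8$-manifold, and the task reduces to showing that the twelve manifolds $M_1, M_{2,j}, M_{3,j}, M_{4,j}, M_6$ of Whitt's list in Table \ref{tabla} are \emph{all} realized in this way.

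Second, I would assemble the twelve lattices $\Gamma_{m,j}\subset G_5$ constructed in the preceding paragraphs, namely $\Gamma_1$, $\Gamma_{2,j}$ for $0\le j\le 3$, $\Gamma_{3,j}$ for $0\le j\le 2$, $\Gamma_{4,j}$ for $0\le j\le 2$, and $\Gamma_6$. For each of these twelve lattices the two invariants used by Whitt to distinguish the manifolds in Table \ref{tabla}, namely the Riemannian holonomy group and the first integral homology group, have already been computed: the holonomy of $\Gamma_{m,j}\backslash G_5$ equals $\Z_m$ (trivial when $m=1$) by \cite[Theorem 3.7]{Tol1} applied to $t_m=2\pi/m$, while the abelianization computations via $\Gamma_{m,j}/[\Gamma_{m,j},\Gamma_{m,j}]\cong \Z^4\oplus (\Z_m)^{?}$ give exactly the integers appearing in the column $H_1(M,\Z)$ of the table (using Hurewicz to identify $\pi_1^{\mathrm{ab}}$ with $H_1$).

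Third, I would conclude by matching entries. Pair $\Gamma_1\backslash G_5$ with $M_1$, $\Gamma_{2,j}\backslash G_5$ with $M_{2,j}$, and similarly for $m=3,4,6$; each pair shares holonomy group and first homology group with its partner. Since Whitt's classification characterizes the twelve compact flat hyper-K\"ahler $8$-manifolds up to affine diffeomorphism precisely by these two invariants, and since the underlying compact flat Riemannian manifolds on both sides are Bieberbach manifolds (hence, by Bieberbach's rigidity theorems, determined up to affine diffeomorphism by their fundamental groups), the constructed solvmanifold is affinely diffeomorphic to the corresponding $M_{\bullet}$ on Whitt's list. As this exhausts all twelve possibilities, every $8$-dimensional compact flat hyper-K\"ahler manifold arises as some $\Gamma\backslash G_5$.

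The only place requiring care is ensuring that matching the pair (holonomy, $H_1$) really suffices to identify the manifolds with Whitt's entries. I would phrase this as: Bieberbach's theorems imply that compact flat manifolds of the same dimension are affinely diffeomorphic if and only if their fundamental groups are isomorphic; the holonomy and $H_1$ data in Table \ref{tabla} are a complete invariant by Whitt's theorem for this particular list; so concordance of both invariants forces affine diffeomorphism. With that observation the theorem is proved by inspection of the twelve cases.
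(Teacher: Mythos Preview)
Your proposal is correct and follows essentially the same approach as the paper: the theorem is stated with the preface ``Summarizing we have,'' and the proof is precisely the case-by-case construction of the twelve lattices $\Gamma_{m,j}$ together with the matching of holonomy and $H_1$ against Whitt's table, exactly as you outline. The invocation of Bieberbach rigidity in your last paragraph is a small extra justification the paper leaves implicit (it simply says each solvmanifold ``corresponds to'' the appropriate $M_\bullet$, relying on the fact that $\Gamma_{m,j}\backslash G_5$ is already a compact flat hyper-K\"ahler $8$-manifold and hence must appear on Whitt's list), but this does not constitute a different route.
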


\medskip

\begin{remark}
The Betti numbers of the non-toral manifolds in Theorem \ref{8dim-HK} have been computed in \cite{DM}: for a manifold with Riemannian holonomy $\Z_2$ the Betti numbers are 
\[ b_0=b_8=1, \quad b_1=b_7=4, \quad b_2=b_6=12, \quad b_3=b_5=28, \quad b_4=38, \]
while for a manifold with holonomy $\Z_3, \Z_4$ or $\Z_6$ the Betti numbers are
\[ b_0=b_8=1, \quad b_1=b_7=4, \quad b_2=b_6=10, \quad b_3=b_5=20, \quad b_4=26. \]
\end{remark}

\begin{remark}
In dimension 12 there are compact flat hyper-K\"ahler manifolds which are not (homeomorphic to) a solvmanifold. Indeed, in \cite{DM} there is an example of a 12-dimensional compact flat hyper-K\"ahler manifold with vanishing first Betti number. This manifold is not a solvmanifold due to \eqref{betti1}. 
\end{remark}

\ 

\section{Hypercomplex almost abelian solvmanifolds in higher dimensions}\label{higher}

In this section we start by  providing  examples of hypercomplex almost abelian nilmanifolds and solvmanifolds arising from Theorem \ref{characterization}. Then, given a hypercomplex almost abelian Lie group, we use its flat Obata connection in order to construct another such Lie group with twice the dimension. Iterating this process we obtain almost abelian Lie groups endowed with Clifford structures.  We also study  the tangent  bundle of any hypercomplex almost abelian Lie group.

\medskip

\begin{example}\label{nilp}
We build nilpotent almost abelian Lie algebras $\g=\R e_0\ltimes_A \R^{4n-1}$, $n\geq 2$, equipped with a hypercomplex structure. In order to do so, we note that the matrix $A$ in \eqref{matrixL} is nilpotent when $\mu=0$ and the matrix $B$ in \eqref{matrixB} satisfies: $X$ is a strictly lower triangular matrix and $Y=Z=W=0$. Indeed, $A$ is a strictly lower triangular matrix.

If $X=0$ and $v_\al\neq 0$ for all $\alpha$ then $\g\cong \g_3\times \R^{4n-8}$, where $\g_3$ is the nilpotent 8-dimensional Lie algebra appearing in Theorem \ref{classification}. Moreover, this decomposition is preserved by the hypercomplex structure. Taking direct products of  lattices $\Gamma$ in $G_3$ with lattices in $\R^{4n-8}$ we obtain lattices in the simply connected Lie group $G$ associated to $\g$ such that the corresponding nilmanifolds are hypercomplex products $(\Gamma\backslash G_3)\times \mathbb{T}^{4n-8}$, where $\mathbb{T}^{4n-8}$ is a torus of real  dimension $4n-8$. 

If $X\neq 0$ then $\dim \g\geq 12$. Let $k= \min \{j\in\N :X^j=0\}$. If $v_\al=0$ for all $\al$ then   $\g$ is $k$-step nilpotent. If $v_\alpha\neq 0$ for all $\al$, we can choose a basis $\mathcal B$ of $\h$ adapted to the hypercomplex structure, $\mathcal B=\{f_1,\ldots, f_{4n-4}\}$, such that $f_1=v_0, f_n=v_1, f_{2n-1}=v_2,f_{3n-2}=v_3$, with $v_0,\ldots, v_3$ as in Theorem \ref{characterization}. Therefore the [$(4n-4)\times 3$]-block of $A$ in the left bottom corner has entries equal either to $0$ or $1$, and it can be shown that $\g$ is either $k$-step or $(k+1)$-step nilpotent. In both cases,  choosing $X$ with all its entries in $\Q$ (or equivalently in $\Z$, according to Lemma \ref{ad-conjugated}), it follows from Theorem \ref{Mal} that the corresponding simply connected Lie group admits lattices. In this way we produce many examples of hypercomplex nilmanifolds which are not hyper-K\"ahler since non-toral nilmanifolds do not admit K\"ahler metrics (\cite{BG}).
\end{example}

\smallskip

\begin{remark}
The 12-dimensional hypercomplex nilpotent Lie algebra appearing in \cite[Section 11.3]{LW} belongs to the family exhibited in Example \ref{nilp}, with $X=\begin{bmatrix}
    0&0\\ 1&0
\end{bmatrix}$ and $v_\al=0$ for all $\al$.
\end{remark}

\begin{remark}
The holonomy group of the Obata connection $\nabla$ on the hypercomplex nilmanifolds from Example \ref{nilp} is contained in $\operatorname{SL}(n, \H)$ (\cite{BDV}). Since $\nabla$ is flat (see Proposition \ref{Obata-flat}),  $\operatorname{Hol}(\nabla)$  is  discrete. 
\end{remark}

\medskip

\begin{example}\label{hcx-non-HK}
We exhibit  solvmanifolds which are not nilmanifolds and do not admit hyper-K\"ahler metrics. 
For any $n\geq 1$, consider the almost abelian Lie algebra $\g_n=\R\ltimes_A \R^{8n+3}$ where $A$ is the diagonal matrix 
\[ A=\operatorname{diag}(0,0,0,1,-1,1,-1,\ldots,1,-1)\in \mathfrak{gl}(8n+3,\R).\]
Then $A$ is given as in \eqref{matrixL} with $\mu=0$, $v_\alpha=0$ and $B$ as in \eqref{matrixB}, where $Y=Z=W=0$
and 
\[ X=\operatorname{diag}(1,-1,1,-1,\ldots,1,-1)\in \mathfrak{gl}(2n,\R).\] 
Therefore $\g_n$ carries a hypercomplex structure. 

For $m\in\N$, $m\geq 3$, let $t_m=\log \frac{m+\sqrt{m^2-4}}{2}$. Then it is easy to verify that $\exp\left(t_m\begin{bmatrix}
1 & 0 \\ 0 & -1\end{bmatrix}\right)$ is conjugate to $\begin{bmatrix} 0 & -1 \\ 1 & m \end{bmatrix}\in \operatorname{SL}(2,\Z)$ and clearly $\exp(t_m A)$ is conjugate to 
\[ E_m:=I_3\oplus \begin{bmatrix} 0 & -1 \\ 1 & m \end{bmatrix}\oplus \cdots \oplus \begin{bmatrix} 0 & -1 \\ 1 & m \end{bmatrix},\] 
where the $(2\times 2)$-block appears $4n$ times. 
Hence, according to Proposition \ref{latt}, the corresponding simply connected almost abelian Lie group $G_n$ admits lattices $\Gamma^n_m=t_m\Z \ltimes P\Z^{8n+3}$, for $m\in\N$, $m\geq 3$, for some $P\in \operatorname{GL}(8n+3,\R)$ with $P^{-1}\exp(t_m A)P=E_m$. The solvmanifolds $M^n_m:=\Gamma^n_m\backslash G_n$ inherit a hypercomplex structure. Moreover, it can be seen that
\[ \Gamma^n_m/[\Gamma^n_m,\Gamma^n_m]\cong \Z^4\oplus (\Z_{m-2})^{4n}, \]
therefore these solvmanifolds are pairwise non-homeomorphic for any choice of $n$ and $m$. 

Note that the Lie algebra $\g_n$ is completely solvable; this fact has some important consequences. First, none of the solvmanifolds $M^n_m$ is homeomorphic to a nilmanifold, due to Theorem \ref{Saito}. Second, $M^n_m$ does not admit any K\"ahler metric, due to the Benson-Gordon conjecture, proved in \cite{Hase} (see also \cite{BC}). In particular, these hypercomplex solvmanifolds do 
not admit any hyper-K\"ahler metric.

Since $\g_n$ is completely solvable, it follows from \eqref{deRham} that the Betti numbers $b_k$ of $M^n_m$ are given by $b_k=\dim H^k(\g_n)$, and they do not depend on $m$. It can be seen, using the corresponding  Chevalley-Eilenberg complex, that
\begin{eqnarray}\label{betti-hcx} 
b_{2k} & = &\binom{4n}{k}^2+6\binom{4n}{k-1}^2+\binom{4n}{k-2}^2, \qquad 0\leq k\leq 4n+2,
\\ \nonumber b_{2k+1} &=& 4\left[\binom{4n}{k}^2+\binom{4n}{k-1}^2\right], \qquad \qquad 0\leq k\leq 4n+1, 
\end{eqnarray}
where $\binom{p}{q}=0$ if $p<q$ or $q<0$.
\end{example}

\smallskip

\begin{remark}
It is known that on any $4p$-dimensional compact  hyper-K\"ahler manifold the odd-degree Betti numbers $b_{2k+1}$ are divisible by $4$ (\cite{Wak}) and the following relation holds (\cite{Sal}):
\begin{equation}\label{salamon}
 p\, b_{2p}=2 \sum_{j=1}^{2p} (-1)^j (3j^2-p)b_{2p-j}.
\end{equation}
We note that even though the solvmanifolds $M^n_m$ from Example \ref{hcx-non-HK} do not admit any hyper-K\"ahler metric, their Betti numbers satisfy the same relations. Indeed, it follows from \eqref{betti-hcx} that $b_{2k+1}$ is divisible by $4$ and, moreover, we can show that \eqref{salamon} holds for $M_m^n$ with $p=2n+1$.

\end{remark}

\begin{remark}\label{SL}
It follows from \cite{FP,Pu} that the complex solvmanifolds $(M^n_m,J_1)$  have trivial canonical bundle, via a left invariant holomorphic section. Therefore, following the lines of the proof of \cite[Corollary 3.3]{BDV} (see also \cite[Theorem 5.4]{GeTa}) and using the fact that the Obata connection $\nabla$ is flat,  we obtain that their holonomy group $\operatorname{Hol}(\nabla)$  is a discrete subgroup of $\operatorname{SL}(2n+1, \H)$.
\end{remark}

\medskip

\begin{example}\label{clifford}
Let $\g =\R\ltimes _A \R^{4n-1}$ be a hypercomplex almost abelian Lie algebra with $A$  given by \eqref{matrixL}. Then the corresponding Obata connection $\nabla$ is flat (Proposition \ref{Obata-flat}) and we can define a Lie algebra $T_{\nabla } \g:=\g \ltimes_\nabla \g  $ with the following Lie bracket:
\[ [(x,v), (y,w)] = ([x,y], \nabla _xw -\nabla _y v), \qquad x, y, v, w\in \g .
\]
The Jacobi identity is satisfied since $\nabla$ is flat (see \cite[\S3]{BD1}). Note that $\g\oplus \{ 0\} $  is a Lie subalgebra isomorphic to $\g$ and $\{0\}\oplus \g$ is an \textit{abelian} ideal.   It turns out that  $T_{\nabla } \g$ is again almost abelian with codimension one abelian ideal $\R^{4n-1}\oplus \g$. The action of $(e_0,0)$ on $T_\nabla \g$ is given by: 
\begin{equation}\label{A_mu} \ad_{(e_0,0)}= \begin{bmatrix}
0& & \\
 & A &  \\               
& & \tilde{A}
\end{bmatrix},\quad \text{ for } \quad 
\tilde{A}=\left[\begin{array}{c|ccc}
\mu &  & &\\
\hline
| & &&\\
 v_0& &A& \\
 | & && \end{array}\right],
\end{equation}
with respect to a basis $\{(e_j,0)\}\cup\{(0,e_j)\}$ of $T_\nabla \g$, where $\{e_j\}, \; j=0, \ldots, 4n-1$, is a basis of $\g$ and 
 $v_0$ is a column vector  as in Theorem \ref{characterization}. Note that $T_\nabla \g$ is unimodular if and only if $\mu + 2\tr{A} =0$; in particular, if $\g$ is unimodular (i.e. $\tr A =0$) then $T_\nabla \g$ is unimodular if and only if $\mu=0$.

If $\hcx$ denotes the hypercomplex structure on $\g$ then, since $\nabla J_\al =0$, \cite[Proposition 3.3]{BD1} implies that $\{J_\al^-\}$ defines a hypercomplex structure on $T_{\nabla } \g$ where:
\[ J_\al^- (x,v)= (J_\al x, - J_\al v ), \qquad x, v\in \g  .
\]
Moreover, $\nabla$ is torsion-free, hence the following endomorphism of $T_{\nabla } \g$ is a complex structure  (\cite[Theorem 4.1]{BD1}):
\[ K(x,v)= (v, -x), \qquad x,v\in \g .
\]
We point out that $K$ anticommutes with $J^- _1$ and $J^- _2$, and $J^- _1,J^- _2, K$ generate a Clifford algebra of order $3$ on $T_{\nabla } \g$.  The connection $\nabla$ on $\g$ induces a connection $\nabla ^1$ on $T_{\nabla } \g$ as follows:
\[ {\nabla}^1_{(x,v)}(y,w)=(\nabla_xy,\nabla_xw), \qquad x, y,v,w\in \g, 
\]
and $\nabla^1$ turns out to be a flat, torsion-free connection on $T_{\nabla} \g$  such that $\nabla ^1 K =0$, and also $\nabla ^1 J_1^-=\nabla ^1 J_2^- =0$. Set $T^1_{\nabla} \g=T_{\nabla} \g$ and for $l>1$, following \cite[\S4.2]{BD1}, we define inductively $ T_{\nabla}^l \g$ to be the tangent algebra of $(T_{\nabla}^{l-1} \g , \nabla ^{l-1})$, that is:
\[ 
T_{\nabla}^l \g=T_{\nabla ^{l-1}}(T_{\nabla}^{l-1} \g), 
\quad 
\nabla^l_{(u,v)}(u',v')= (\nabla ^{l-1}_u u', \nabla ^{l-1}_u v'), \quad u,u',v,v'\in T_{\nabla}^{l-1} \g.
\]
Then $\nabla^l$ is a flat, torsion-free connection on $T_{\nabla}^l \g$. Furthermore, $T_{\nabla}^l \g$ carries a Clifford structure of order $l+2$ such that all its generators are $\nabla^l$-parallel. The Lie algebras $T_{\nabla}^l \g$ are almost abelian and the Lie bracket on $T^l\g$ is determined by:
\[ \ad_{(e_0,0)}= \begin{bmatrix} 0&  \\
 & A \end{bmatrix} \oplus \tilde{A}\oplus \cdots \oplus \tilde{A} ,
\]
where $\tilde{A}$ from \eqref{A_mu} appears $2^l-1$ times. 

For the choice of $A$ given in Example \ref{hcx-non-HK}, the simply connected Lie group with Lie algebra $T_{\nabla}^l \g_n$ admits lattices for all $l\geq 1$. Therefore, we obtain solvmanifolds carrying Clifford structures of order $k$ for arbitrary $k\geq 2$.  Moreover, in the same manner as in Remark \ref{SL}, we can show that the holonomy group $\operatorname{Hol}(\nabla^l)$ is a discrete subgroup of $\operatorname{SL}(m, \H)$ for $m= 2^l(2n+1)$.
\end{example}

\ 

To end this section we exhibit examples of hypercomplex Lie groups arising from almost abelian Lie groups but  which are not  almost abelian themselves. More precisely, we consider the tangent bundle of a hypercomplex almost abelian Lie group.

\begin{example}
If $G$ is any real  Lie group and $\g$ is its Lie algebra then its tangent bundle $TG$ is diffeomorphic to $G\times \g$. Moreover, it admits a natural Lie group structure such that its Lie algebra is $T\g=\g\ltimes_{\ad}\g$, where  $\ad:\g\to \operatorname{End}(\g)$ is the adjoint representation of $\g$. The Lie bracket is given by \[
[(x,v), (y,w)] = ([x,y], [x,w] -[y, v]), \qquad x, y, v, w\in \g .
\]

If $\g$ carries a complex structure $J$ then, according to \cite{BD1}, $T\g$ admits a complex structure $J^+$ defined by $J^+(x,v)=(Jx,Jv)$.

If $\g=\R e_0\ltimes_A\R^d$ is an almost abelian Lie algebra, then the Lie bracket on $T\g$ is given by:
\begin{gather*}
    [(e_0,0),(x,0)]=(Ax,0), \quad  [(e_0,0),(0,v)]=(0,Av), \\
    [(v,0),(0,e_0)]=(0,-Av), \quad 
    [(x,0),(0,v)]=(0,0), 
\end{gather*}
for $x,v\in\R^d$. Therefore we can write $T\g=\R^2\ltimes(\R^d\oplus \R^d)$, where $\R^2$ is generated by $(e_0,0)$ and $(0,e_0)$, and the action of $\R^2$ on $\R^d\oplus \R^d$ is the following:
\[ A_1:=\ad_{(e_0,0)}|_{(\R^d\oplus \R^d)}=\left[\begin{array}{c|c}
 A & 0\\ \hline 0 & A
\end{array}\right],
\qquad A_2:=\ad_{(0,e_0)}|_{(\R^d\oplus \R^d)}=
\left[\begin{array}{c|c}
 0 & 0\\ \hline A & 0
\end{array}\right]. \]
Note that $T\g$ is unimodular if and only if $\g$ is unimodular. 

If $d=4n-1$ and $A$ is as in \eqref{matrixL}, so that   $\g$ carries a hypercomplex structure $\hcx$, then $T\g$ also carries a hypercomplex structure $\{J_\al^+\}$. Moreover, if $A$ is nilpotent  then $T\g$ is  nilpotent as well.   If, in addition, $G$ is simply connected and all the entries of $A$  are integers we have that $TG$ admits  lattices, giving rise to hypercomplex nilmanifolds. 
\end{example}

\medskip

\ 

\end{document}